\documentclass[runningheads]{llncs}
\usepackage[T1]{fontenc}
\usepackage{graphicx}
\usepackage{amsmath,amssymb}
\usepackage{mathrsfs}
\usepackage{xspace}
\usepackage{xcolor}
\usepackage{tikz-cd}
\usetikzlibrary{decorations.pathreplacing}
\usetikzlibrary{decorations.markings,arrows.meta}
\usetikzlibrary{fit,backgrounds}
\tikzset{
	midarrow/.style={
		postaction={
		decorate,
		decoration={markings,mark=at position 0.9 with {\arrow{>[length=6pt,width=6pt]}}}
		}
	}
}

\newtheorem{thm}{Theorem}
\newtheorem{cor}[thm]{Corollary}
\newtheorem{lem}[thm]{Lemma}
\newtheorem{prop}[thm]{Proposition}

\usepackage{enumitem}
\setlist[itemize]{label=\textbullet}

\usepackage{subcaption}
\newcommand{\grosmot}[1]{\raisebox{0pt}[0pt][0pt]{#1}}

\newcommand{\ie}{\textit{i.e.}\xspace}

\renewcommand{\leq}{\leqslant}

\renewcommand{\epsilon}{\varepsilon}
\renewcommand{\phi}{\varphi}
\renewcommand{\rho}{\varrho}
\newcommand{\B}{\ensuremath{\mathbb{B}}\xspace}

\makeatletter
\NewDocumentCommand{\textsubstack}{O{c}m}{%
  \mbox{%
    \linespread{1}
    \fontsize{\sf@size}{\fpeval{1.1*\sf@size}}\selectfont
    \begin{tabular}{@{}#1@{}}#2\end{tabular}%
  }%
}
\NewCommandCopy{\amsmathxrightarrow}{\xrightarrow}
\RenewDocumentCommand{\xrightarrow}{om}{%
  \IfNoValueTF{#1}{%
    \amsmathxrightarrow{#2}%
  }{%
    \amsmathxrightarrow[{#1}]{#2}%
  }%
}
\makeatother

\usepackage{bigdelim}
\newcommand{\ASTG}{\ensuremath{\mathrm{ASTG}}}
\usepackage{bm}
\newcommand{\attractors}{\ensuremath{\bm{\mathcal{A}}}}
\newcommand{\chemin}[1]{\xrightarrow[\vphantom{*}\smash{\raisebox{0.3ex}{$*$}}]{#1}}
\newcommand{\textwedge}{\ensuremath{\wedge}\xspace}
\newcommand{\textvee}{\ensuremath{\vee}\xspace}
\newcommand{\textneg}{\ensuremath{\neg}\xspace}

\begin{document}
\title{Strong modules and asynchronous attractors of Boolean networks}
\author{Paul Ruet}
\institute{CNRS, Université de Paris, IRIF, Paris, France \\
\email{ruet@irif.fr} 
}
\maketitle

\begin{abstract}
We consider Boolean networks with interaction graphs partitioned into strongly connected components, which we call strong modules. This type of network decomposition has been considered in the literature, primarily from the perspective of attractor detection algorithms. In this paper, we aim to provide an algebraic basis for this line of research in the case of asynchronous Boolean networks. We prove that the asynchronous attractors of a network can be described as a dependent sum construction: as products of attractors of its controlled strong modules. We then show that a representation of all attractors can be computed in polynomial time under two conditions: the strong modules are small, and either the network is sparse or its defining functions have small size circuits (in particular when they are nested canalizing). We illustrate these results on a published Boolean model.
\keywords{Asynchronous Boolean network; Asynchronous attractor; Strongly connected component; Modular decomposition; Dependent sum; Computational complexity; Canalizing Boolean function}
\end{abstract}

\section{Introduction}


Introduced by von Neumann in the context of automata theory \cite{vN66}, Boolean networks are widely used as simplified models of various biological systems \cite{MP43,Kau93}, in particular gene regulatory networks, where each gene is represented as a binary variable indicating whether it is active or not. The interaction of $n$ variables (genes) is represented by a graph $G$ with $n$ vertices, and the dynamical evolution of each variable $v$ is encoded through a Boolean function with inputs the in-neighbourhood of $v$.

Despite their apparent simplicity, the exact dynamics of Boolean networks of high dimension $n$ is generally intractable. In this paper, we are interested in viewing interaction graphs $G$ as composed of smaller subgraphs, or modules, and we aim to relate the dynamics of these building blocks to the global network's dynamics. In particular, we wish to recover the global dynamics when these modules have tractable dynamics and sufficiently nice interactions between them.

This question of modularity of gene networks has been considered in recent literature, with a focus on identifying biologically meaningful modules (subsets of genes and interactions that carry out relatively self-contained functions) and algorithmic issues (see \cite{IM07,TC13,ZKF13,DAE16,JM21} and references therein). A modular structure also emerges in small-world and scale-free graphs, which are frequently used to model gene regulation \cite{WS98,BA99,Alo07}.

From a mathematical point of view, strongly connected components of the interaction graph $G$ are natural candidates for modules. This is the choice made explicitely in \cite{TC13,ZKF13,MPQY19,KWVML23,MVDKWL25}, and it is the choice we make in this paper too, although future work should likely consider less restrictive notions of biological modules. Strongly connected components are also generalizations of cycles, and studying the influence of components on dynamics fits within the line of research initiated by the biologist R. Thomas, relating the dynamical properties of a network to the presence of positive or negative cycles in its interaction graph \cite{Tho81,RR08b,Rue17}.

Attractors are a central dynamical property, capturing the long-term behavior of the system, often interpreted as stable biologically functional states. The precise definition however depends on the type of Boolean dynamics. In synchronous dynamics, all possible updates occur simultaneously. In asynchronous dynamics, which are often considered more biologically realistic, only one possible update occurs at a time. In this paper, we are interested in asynchronous dynamics.

In Section \ref{sec:decomposition}, we consider a network with interaction graph $G$ split into two vertex sets $I$ and its complement $V(G)\setminus I$, such that $G$ has no edge from $V(G)\setminus I$ to $I$. Such a decomposition induces a network $N_2$ on $V(G)\setminus I$, which is (in general) ``controlled'' by a network $N_1$ on $I$. The network $N_2=N_2(x)$ is controlled in the sense that, in state $x$, the possible transitions of variables in $V(G)\setminus I$ depend on the $I$-coordinates of $x$. Our main result (Theorem \ref{thm:attractors}) states that, when $x$ runs through an attractor $a_1$ of $N_1$, the control of $N_2=N_2(a_1)$ is much smoother: then the ($V(G)\setminus I$)-transitions form an attractor $a_2$ of $N_2(a_1)$. Actually, the asynchronous attractors of the whole network can be described as a dependent sum construction: more precisely, as Cartesian products $a_1\times a_2$ of an attractor $a_1$ of $N_1$ and an attractor $a_2$ of $N_2(a_1)$ (we simplify the notation here).

In Section \ref{sec:gendecomposition}, we apply Theorem \ref{thm:attractors} to the partition of the interaction graph $G$ in strongly connected components, which we call \emph{strong modules}, and we give (Theorem \ref{thm:condensation}) a general formula for the attractors of a network in as Cartesian products of attractors of its controlled strong modules. In \cite{KWVML23,MVDKWL25}, a similar formula is given for synchronous attractors, although the mathematical details appear to be rather different.

The relation between strong module decomposition and asynchronous attractors is studied in \cite{MPQY19} with a view toward attractor detection algorithms. Given a decomposition as above, the authors define \emph{realisations}, which are overlapping subnetworks, on $I$ and on a superset $J\cup(V(G)\setminus I)$ of $V(G)\setminus I$, which are combined by gluing along overlapping variables in $J$. In our Theorem \ref{thm:attractors}, subnetworks do not no overlap, and our remark above on the control by attractors leads to a quite simple algebraic interpretation of attractor sets, and to Section \ref{sec:complexity} on the complexity of computing attractors.

In Section \ref{sec:complexity}, we show indeed that a representation of all attractors can be computed in polynomial (instead of exponential) time under two conditions: first, the strong modules are bounded (independently of dimension $n$); second, either the network is sparse (in the sense of bounded in-degree) or its defining functions have polynomial size Boolean circuits. The latter condition holds in particular when the Boolean functions are nested canalizing (Section \ref{sec:canal}). Recall that the concept of canalization was proposed by Waddington \cite{Wad42} as the property of a biological process of being able to produce a relatively stable phenotype despite the presence of variability. Nested canalizing functions form a class of Boolean functions introduced by Kauffman \cite{Kau93,Kau03} that formalize this canalizing behaviour observed in gene regulatory networks, and recent work \cite{Sub22} gives evidence that nested canalizing functions are predominant in Boolean gene regulatory networks.

In Section \ref{sec:bioex}, we conclude by exploring a Boolean model of a signaling network controlling cell-cycle progression presented in \cite{SFLKBMMSCTPWBA09}. We show the hierarchical structure of attractors present in this model, and compute its $3$ attractors which turn out to be fixed points. Although this illustrates how to apply our results to a realistic example, further work clearly includes the study of more gene network models from the literature, such as those listed in \cite{KWVML23} and exhibiting strong modular structure, in light of the theory presented here.

\section{Boolean networks}

Given a directed graph $G$, $V(G)$ and $E(G)$ denote respectively its sets of vertices and edges. The in-neighborhood (resp. out-neighborhood) of a vertex $v\in V(G)$ is denoted by $G^-(v)$ (resp. $G^+(v)$).

Let $\B=\{0,1\}$. We define a \emph{Boolean network} as a tuple $(G,(f_v)_{v\in V(G)})$ where $G$ is a directed graph and for each $v\in V(G)$, $f_v$ is a function from \grosmot{$\B^{G^-(v)}$} to $\B$. We shall simply denote the family $(f_v)_{v\in V(G)}$ by $f$ and the network by $(G,f)$, and call $G$ the \emph{interaction graph} of $(G,f)$. The cardinality of $V(G)$ is the \emph{dimension} of $(G,f)$.

Note that we do not require that all edges in $G$ are so-called \emph{functional interactions}, so that our interaction graphs are a slight abuse of terminology: indeed, it may happen that for some $v$, $f_v$ does not depend on some $u\in G^-(v)$, \ie for all \grosmot{$x\in\B^{G^-(v)}$}, $f_v(x)=f_v(x^u)$, where $x^u$ denotes the state $x$ with coordinate $u$ flipped. This choice simplifies the presentation, and it is morally harmless, since we are interested in strongly connected components of the interaction graph (see Section \ref{sec:asyncattract}), and the subgraph of functional interactions in $G$ has only smaller components than $G$.

\subsection{Asynchronous state transition graph}
\label{sec:astg}

The \emph{asynchronous state transition graph} associated to a Boolean network $N=(G,f)$ is the directed graph $\ASTG(N)$ with vertex set $\B^{V(G)}$ and a directed edge from $x$ to $y$ when $d(x,y)=1$ and for some $v\in V(G)$,
$$
f_v(x\restriction G^-(v))=y_v\neq x_v.
$$
Here, $d$ denotes the Hamming distance, and the notation $x\restriction I\in\B^I$ denotes the restriction of $x$ to coordinates in $I$, \ie the composite
$$
x\restriction I:I \xrightarrow{\subseteq} V(G) \xrightarrow{x} \B.
$$
The asynchronous state transition graph $\ASTG(N)$ is therefore a \emph{partial orientation} of the hypercube $\B^{V(G)}$, and it is folklore that Boolean networks are in bijective correspondence with such partial orientations. We shall use the notation
$$
x\xrightarrow{N}y \quad \text{(resp. $x\chemin{N}y$)}
$$
to denote that $\ASTG(N)$ has an edge (resp. a directed path) from $x$ to $y$.

\subsection{Asynchronous attractors}
\label{sec:asyncattract}

Recall that the \emph{(asynchronous) attractors} of a Boolean network $N$ are the terminal strongly connected components of its asynchronous state transition graph $\ASTG(N)$. We let $\attractors(N)$ denote the set of attractors of $N$.

\section{Decompositions of Boolean networks}
\label{sec:decomposition}

Let $N=(G,f)$ be a Boolean network, and assume there is a non-empty strict subset $I\subset V(G)$ such that $G$ has no edge from $I$ to $V(G)\setminus I$. We call the ordered pair $(I,V(G)\setminus I)$ a \emph{decomposition} of $N$. It induces a network $N[I]$ and a family of networks $N(I,x)$ for $x\in\B^I$, which we describe below.

Let then $G[I]$ be the subgraph of $G$ induced by $I$ and $N[I]=(G[I],(f_v)_{v\in I})$ be the induced network. For each $x\in\B^I$ and each $v\in V(G)\setminus I$, let $f^x_v:\B^{V(G)\setminus I}\to\B$ be defined by
$$
f^x_v(y)=f_v(x,y).
$$
Here $(x,y)$ denotes the unique $z\in\B^{G^-(v)}$ such that
\begin{align*}
x &= z\restriction G^-(v)\cap I \\
y &= z\restriction G^-(v)\cap(V(G)\setminus I).
\end{align*}
This generally implies a reordering of coordinates and is therefore an abuse of notation.

Note that $f^x_v$ may depend on fewer coordinates: some $u\in V(G)\setminus I$ may be such that $f_v(x,y)=f_v(x,y^u)$ for all $y\in\B^{V(G)\setminus I}$, even though $f_v(x',y)\neq f_v(x',y^u)$ for some $x'\in\B^I$. But our choice of enabling non-functional interactions exempts us from having to worry about these lost coordinates.

Let $G(I,x)$ be the induced subgraph of $G[V(G)\setminus I]$, and $N(I,x)$ be the network $(G(I,x),(f^x_v)_{v\in V(G)\setminus I})$.

\begin{prop}
\label{lem:indep}
If $G$ has no edge from $I\subseteq V(G)$ to $V(G)\setminus I$, then $N(I,x)=N[V(G)\setminus I]$ for any $x\in\B^I$.
\end{prop}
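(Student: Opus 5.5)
The plan is to compare the two networks directly, component by component: first the underlying graphs, then the local functions. By construction, $N(I,x)=(G(I,x),(f^x_v)_{v\in V(G)\setminus I})$, where $G(I,x)$ is the subgraph of $G$ induced by $V(G)\setminus I$. The network $N[V(G)\setminus I]$ is $(G[V(G)\setminus I],(f_v)_{v\in V(G)\setminus I})$. So the graphs agree by definition, and it remains to show $f^x_v=f_v$ for every $v\in V(G)\setminus I$, where both are regarded as functions of the in-neighbours of $v$ in $G[V(G)\setminus I]$.

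\textbf{Step 1: in-neighbourhoods.} Fix $v\in V(G)\setminus I$. Since $G$ has no edge from $I$ to $V(G)\setminus I$, no $u\in I$ has $v$ as an out-neighbour. Hence $G^-(v)\cap I=\emptyset$, and so
$$G^-(v)=G^-(v)\cap(V(G)\setminus I),$$
which is exactly the in-neighbourhood of $v$ in $G[V(G)\setminus I]$.

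\textbf{Step 2: the local functions.} Unfold the definition $f^x_v(y)=f_v(x,y)$. Here $(x,y)$ is the unique $z\in\B^{G^-(v)}$ with $z\restriction G^-(v)\cap I=x$ and $z\restriction G^-(v)\cap(V(G)\setminus I)=y$. The phrase ``$z\restriction G^-(v)\cap I=x$'' is to be read as agreement with the restriction of $x$. By Step 1 the first condition concerns the empty index set, so it is vacuous and gives $z=y$. Therefore $f^x_v(y)=f_v(y)$ for every $y$, independently of $x$.

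There is one subtlety: the text declares $f^x_v$ on $\B^{V(G)\setminus I}$ rather than on the in-neighbourhood. I would handle this with the convention the paper already adopts, namely that $y$ enters $f_v$ only through its restriction to $G^-(v)$. Under that convention the equality of functions is unambiguous.

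\textbf{Main obstacle.} No step is mathematically hard. The only real obstacle is notational: the paper abuses notation in defining $(x,y)$, and the hypothesis as stated speaks of edges ``from $I$ to $V(G)\setminus I$''. Step 1 uses precisely that direction, which makes the in-neighbourhoods of vertices outside $I$ avoid $I$. I will note that this is the direction needed for the control to disappear. In the decomposition of Section~\ref{sec:decomposition} the roles are reversed: there $V(G)\setminus I$ is controlled by $I$.
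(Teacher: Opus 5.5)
Your proof is correct, and it is the unfolding of definitions that the paper leaves implicit, since the proposition is stated there without proof: the hypothesis forces $G^-(v)\cap I=\emptyset$ for every $v\in V(G)\setminus I$, so the graphs coincide and $f^x_v(y)=f_v(y\restriction G^-(v))$ does not depend on $x$. Your remark on the edge direction is also right: this is the extra hypothesis used in Proposition~\ref{prop:com}, which is opposite to the intended decomposition condition of no edge from $V(G)\setminus I$ into $I$ (as in the introduction).
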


If $a\subseteq\B^I$, let $N(I,a)$ be the (unique) network such that $\ASTG(N(I,a))$ is the edge union of the directed graphs $\ASTG(N(I,x))$ for $x\in a$.

\subsection{Decompositions and reachability}
\label{sec:reach}

Lemma \ref{lem:reachability} shows that the paths from $y\in\B^{V(G)\setminus I}$ in $\ASTG(N(I,x))$ correspond exactly to the paths in $\ASTG(N)$ starting from $(x,y)\in\B^I\times\B^{V(G)\setminus I}$ and keeping $x$ fixed: $x$ behaves as a state controlling $V(G)\setminus I$. On the other hand, $I$ is uncontrolled: paths in $\ASTG(N[I])$ starting from $x$ are in bijective correspondence with paths in $\ASTG(N)$ starting from $(x,y)$ and keeping $y$ fixed.

As we shall see in Section \ref{sec:att}, the situation of an attractor instead of a single controlling state is quite different, and actually simpler. Letting $x$ move inside an attractor $a$ of $N[I]$ gives much more freedom to $y$: as any two points of $a$ are reachable from each other, $y$ can move along any path in the edge union of the $\ASTG(N(I,x))$ for $x\in a$, hence any path in $\ASTG(N(I,a))$.

Lemma \ref{lem:reachability} follows immediately from the definitions of $N[I]$, $N(I,x)$ and $f^x_v$:

\begin{lem}
\label{lem:reachability}
Let $N=(G,f)$ be a Boolean network and $(I,V(G)\setminus I)$ be a decomposition of $N$. If $x,x'\in\B^I$ and $y,y'\in\B^{V(G)\setminus I}$, then
$$
(x,y) \xrightarrow{N} (x',y') \Longleftrightarrow
\begin{cases}
y=y' \text{ and } x \xrightarrow{N[I]} x' & or \\
x=x' \text{ and } y \xrightarrow{N(I,x)} y'.&
\end{cases}
$$
\end{lem}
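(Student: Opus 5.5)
The plan is to unfold the definition of an edge in $\ASTG(N)$ and sort it according to which coordinate is flipped. Identify $\B^{V(G)}$ with $\B^I\times\B^{V(G)\setminus I}$ and write $z=(x,y)$, $z'=(x',y')$. By definition, $z\xrightarrow{N}z'$ holds iff $d(z,z')=1$ and the unique coordinate $v$ in which they differ satisfies $f_v(z\restriction G^-(v))=z'_v\neq z_v$. Since $d(z,z')=d(x,x')+d(y,y')$, distance $1$ means exactly one of two things: $y=y'$ and $d(x,x')=1$, or $x=x'$ and $d(y,y')=1$. Correspondingly the flipped vertex $v$ lies in $I$ or in $V(G)\setminus I$. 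I will prove the equivalence separately in these two cases; the reverse implication is the same computation read backwards.

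\emph{Case $v\in I$.} The key point is that $f_v$ only reads coordinates in $G^-(v)\subseteq I$. This is the defining property of a decomposition: the edges between $I$ and its complement must run from $I$ outward, so no edge enters $I$ from outside. Hence $z\restriction G^-(v)=x\restriction G[I]^-(v)$. The condition $f_v(z\restriction G^-(v))=z'_v\neq z_v$ then reads $f_v(x\restriction G[I]^-(v))=x'_v\neq x_v$. Together with $d(x,x')=1$, this is exactly an edge $x\xrightarrow{N[I]}x'$, since $N[I]$ keeps the same functions $f_v$ for $v\in I$.

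\emph{Case $v\in V(G)\setminus I$.} Now $z\restriction G^-(v)$ splits as the pair of its restrictions to $G^-(v)\cap I$ and to $G^-(v)\cap(V(G)\setminus I)$. Because $x'=x$, by definition $f_v(z\restriction G^-(v))=f^x_v(y)$, reading $f^x_v$ on the in-neighbourhood of $v$ in $G(I,x)$. So the condition is $f^x_v(y)=y'_v\neq y_v$ with $d(y,y')=1$, which is an edge $y\xrightarrow{N(I,x)}y'$.

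The only step needing care is the direction convention in the case $v\in I$. I must check that $G^-(v)\subseteq I$ for every $v\in I$, so that the edge relation of $N[I]$ does not secretly depend on $y$. The statement defining a decomposition is phrased as ``no edge from $I$ to $V(G)\setminus I$''. However, the surrounding discussion (with $I$ uncontrolled and controlling its complement) and the lemma itself both require the reading ``no edge from $V(G)\setminus I$ into $I$''. I will adopt that reading, consistent with the introduction. Everything else is bookkeeping with the abuse of notation $(x,y)$ and with non-functional edges, which the paper's conventions allow.
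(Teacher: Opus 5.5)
Your proof is correct and takes the same approach as the paper, which simply states that the lemma follows immediately from the definitions of $N[I]$, $N(I,x)$ and $f^x_v$. Your case split on whether the flipped coordinate lies in $I$ or in $V(G)\setminus I$ is exactly that unfolding. You are also right to read the decomposition condition as ``no edge from $V(G)\setminus I$ into $I$'', as in the introduction, the generalized decompositions and the example with edge $1\to 3$. Under the literal wording, $N[I]$ would not even be well defined, because some $f_v$ with $v\in I$ could then need inputs from outside $I$.
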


\begin{cor}
\label{cor:reachability}
Let $N=(G,f)$ be a Boolean network and $(I,V(G)\setminus I)$ be a decomposition of $N$. If $x,x'\in\B^I$ and $y,y'\in\B^{V(G)\setminus I}$, then
$$
(x,y) \chemin{N} (x',y') \Longrightarrow x \chemin{N[I]} x'.
$$
\end{cor}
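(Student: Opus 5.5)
The plan is an induction on the length of the directed path in $\ASTG(N)$ from $(x,y)$ to $(x',y')$, projecting each edge onto its $I$-coordinates. Write the path as
$$
(x,y)=(x_0,y_0)\xrightarrow{N}(x_1,y_1)\xrightarrow{N}\cdots\xrightarrow{N}(x_k,y_k)=(x',y'),
$$
with $x_i\in\B^I$ and $y_i\in\B^{V(G)\setminus I}$. The claim is that the sequence $x_0,x_1,\dots,x_k$ becomes a directed path in $\ASTG(N[I])$ once repeated consecutive entries are removed. Concatenating steps then gives $x\chemin{N[I]}x'$.

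For a single step $(x_i,y_i)\xrightarrow{N}(x_{i+1},y_{i+1})$, apply Lemma~\ref{lem:reachability}, which gives two cases.
\begin{itemize}
\item Either $y_i=y_{i+1}$ and $x_i\xrightarrow{N[I]}x_{i+1}$, so the step contributes one edge of $\ASTG(N[I])$.
\item Or $x_i=x_{i+1}$ and $y_i\xrightarrow{N(I,x_i)}y_{i+1}$, so the $I$-projection does not move and the step contributes an empty path.
\end{itemize}
In both cases $x_i\chemin{N[I]}x_{i+1}$, where $\chemin{N[I]}$ is read reflexively so that a path may have length zero. By transitivity of reachability, $x_0\chemin{N[I]}x_k$.

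The only point needing care is the convention that $\chemin{}$ allows paths of length zero. This is needed both for the case $k=0$ and for steps that move only the $V(G)\setminus I$-coordinates. This is the standard reading of ``directed path'' here, so no real obstacle is expected; the corollary is essentially immediate from Lemma~\ref{lem:reachability}.
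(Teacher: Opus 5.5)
Your proof is correct and matches the paper's, which simply invokes Lemma~\ref{lem:reachability} together with induction on the (minimal) length of a path from $(x,y)$ to $(x',y')$ in $\ASTG(N)$. Your remark that $\chemin{N[I]}$ must be read reflexively, allowing paths of length zero, is apt. The paper leaves this implicit, but the corollary would fail otherwise, for instance when $x$ is a fixed point of $N[I]$ and only $V(G)\setminus I$-coordinates move. The same reading is also relied on where the corollary is used in the proof of Theorem~\ref{thm:attractors}.
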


\begin{proof}
By Lemma \ref{lem:reachability}, and induction on the minimal length of a path from $(x,y)$ to $(x',y')$ in \ASTG(N).
\end{proof}

\subsection{Decompositions and attractors}
\label{sec:att}

\begin{thm}
\label{thm:attractors}
Let $N$ be a Boolean network and $(I,V(G)\setminus I)$ be a decomposition of $N$. Then
$$
\attractors(N)
=
\{ a\times b
\mid
a\in\attractors(N[I]),
b\in\attractors(N(I,a))
\}
$$
and $\attractors(N)$ is therefore in bijective correspondence with the dependent sum construction
$$
\sum_{a\in\attractors(N[I])}\attractors(N(I,a))
=
\{ (a,b)
\mid
a\in\attractors(N[I]),
b\in\attractors(N(I,a))
\}.
$$
\end{thm}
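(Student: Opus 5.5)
Note first that the decomposition convention is that $G$ has no edge from $V(G)\setminus I$ to $I$; this is how Lemma \ref{lem:reachability} and Corollary \ref{cor:reachability} are used, so that $I$ is uncontrolled.

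The plan is to prove the two inclusions separately. The main tool is Lemma \ref{lem:reachability} together with Corollary \ref{cor:reachability}. We also need one extra reachability fact for the attractor case. Let $a\in\attractors(N[I])$, $x\in a$ and $y,y'\in\B^{V(G)\setminus I}$. We claim that $y\chemin{N(I,a)}y'$ if and only if, for every (equivalently, some) $x'\in a$, we have $(x,y)\chemin{N}(x',y')$ along a path staying in $a\times\B^{V(G)\setminus I}$.

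For the forward direction, argue edge by edge. An edge $z\to z'$ of $\ASTG(N(I,a))$ comes from some $x''\in a$ with $z\xrightarrow{N(I,x'')}z'$. Since $a$ is strongly connected in $\ASTG(N[I])$, first move from the current $I$-part to $x''$ inside $a$, keeping the second coordinate fixed, using the first case of Lemma \ref{lem:reachability}. Then fire the $z\to z'$ edge using the second case. At the end, move to $x'$.

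For the converse, take a path in $\ASTG(N)$ starting in $a\times\B^{V(G)\setminus I}$. By Corollary \ref{cor:reachability} and terminality of $a$, the path stays in $a\times\B^{V(G)\setminus I}$. Its second-coordinate steps are edges of $N(I,x'')$ for some $x''\in a$, hence edges of $N(I,a)$.

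\textbf{Inclusion $\supseteq$.} Let $a\in\attractors(N[I])$ and $b\in\attractors(N(I,a))$. By the claim, any two points of $a\times b$ are mutually reachable in $\ASTG(N)$, so $a\times b$ is strongly connected. It remains to show it is terminal. Take $(x,y)\in a\times b$ and suppose $(x,y)\chemin{N}(x',y')$. Corollary \ref{cor:reachability} and terminality of $a$ give $x'\in a$. The claim then gives $y\chemin{N(I,a)}y'$, and terminality of $b$ gives $y'\in b$. Hence $a\times b$ is a terminal strongly connected component of $\ASTG(N)$.

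\textbf{Inclusion $\subseteq$.} Let $A\in\attractors(N)$ and let $a$ be the projection of $A$ onto $\B^I$.
\begin{itemize}
\item \emph{$a$ is an attractor of $N[I]$.} First, $a$ is closed under $\xrightarrow{N[I]}$: if $x\in a$, $(x,y)\in A$ and $x\xrightarrow{N[I]}x'$, then $(x,y)\xrightarrow{N}(x',y)$, so $(x',y)\in A$ and $x'\in a$. Second, $a$ is strongly connected: for $x,x'\in a$ with witnesses in $A$, mutual reachability in $A$ projects, via Corollary \ref{cor:reachability}, to mutual reachability in $N[I]$. A closed, strongly connected set is a terminal component, so $a\in\attractors(N[I])$.
\item \emph{$A$ is a product $a\times b$.} Fix $(x_0,y_0)\in A$ and let $b$ be the set of $y$ reachable from $y_0$ in $\ASTG(N(I,a))$. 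Since $A$ is closed under reachability, the claim (forward direction) gives $a\times b\subseteq A$. The claim (converse) gives $A\subseteq a\times b$, because every point of $A$ is reachable from $(x_0,y_0)$.
\item \emph{$b$ is an attractor of $N(I,a)$.} Any two points of $b$ are mutually reachable in $N(I,a)$: they are mutually reachable in $A$, and the claim transfers this. The set $b$ is closed by construction. Hence $b\in\attractors(N(I,a))$.
\end{itemize}

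The bijection with the dependent sum follows because $a\times b$ determines $a$ and $b$ (both are nonempty). I expect the main care to be needed in proving the claim, specifically in showing that the $I$-coordinate can be steered freely inside $a$ without disturbing the second coordinate, while any path never leaves $a$. Both facts follow from Lemma \ref{lem:reachability} and terminality of $a$.
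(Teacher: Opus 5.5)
Your proof is correct and takes essentially the same route as the paper. Both prove two inclusions, steering the $I$-coordinate freely inside $a$ to fire each edge of $N(I,a)$ (the paper's step 1(a)), and both use Corollary~\ref{cor:reachability} with terminality of $a$ to keep paths inside $a\times\B^{V(G)\setminus I}$. Packaging this as a two-way path correspondence, and defining $b$ as an $N(I,a)$-reachable set rather than a projection, is only a reorganization. Your converse direction does make explicit a step the paper's parts 2(b)--(c) only sketch: there a path from $(x,y)$ to $(x,y')$ in $\ASTG(N)$ is written as a single edge and turned into an $N(I,a)$-path.
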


\subsection{Example}
\label{sec:ex}

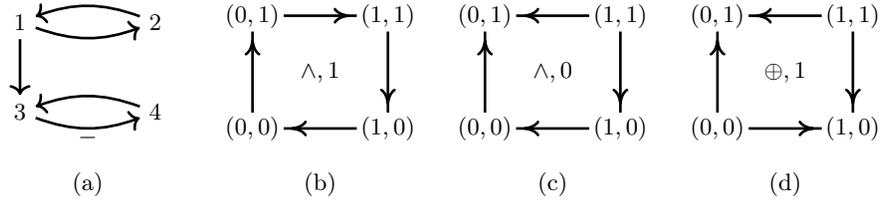
\begin{figure}[t]
\centering
\begin{subfigure}{0.24\textwidth}
\centering
\begin{tikzpicture}[scale=0.6,line width=1pt]
\node (3) at (0,0) {$3$};
\node (4) at (3,0) {$4$};
\node (2) at (3,2) {$2$};
\node (1) at (0,2) {$1$};
\draw[->] (1) -- (3);
\draw[->,bend right=20] (1) to (2);
\draw[->,bend right=20] (2) to (1);
\draw[->] (3) edge[bend right=20] node[midway,below=-3pt]{$-$} (4);
\draw[->,bend right=20] (4) to (3);
\end{tikzpicture}
\caption{}
\label{fig:a}
\end{subfigure}
\hfill
\begin{subfigure}{0.24\textwidth}
\centering
\begin{tikzpicture}[scale=0.6,line width=1pt]
\node at (1.5,1.25) {$\wedge,1$};
\node[inner sep=1pt] (00) at (0,0) {$(0,0)$};
\node[inner sep=1pt] (10) at (3,0) {$(1,0)$};
\node[inner sep=1pt] (11) at (3,2.5) {$(1,1)$};
\node[inner sep=1pt] (01) at (0,2.5) {$(0,1)$};
\draw[midarrow] (00) -- (01);
\draw[midarrow] (01) -- (11);
\draw[midarrow] (11) -- (10);
\draw[midarrow] (10) -- (00);
\end{tikzpicture}
\\
\vspace{1.5mm}
\caption{}
\label{fig:b}
\end{subfigure}
\hfill
\begin{subfigure}{0.24\textwidth}
\centering
\begin{tikzpicture}[scale=0.6,line width=1pt]
\node at (1.5,1.25) {$\wedge,0$};
\node[inner sep=1pt] (00) at (0,0) {$(0,0)$};
\node[inner sep=1pt] (10) at (3,0) {$(1,0)$};
\node[inner sep=1pt] (11) at (3,2.5) {$(1,1)$};
\node[inner sep=1pt] (01) at (0,2.5) {$(0,1)$};
\draw[midarrow] (00) -- (01);
\draw[midarrow] (11) -- (01);
\draw[midarrow] (11) -- (10);
\draw[midarrow] (10) -- (00);
\end{tikzpicture}
\\
\vspace{1.5mm}
\caption{}
\label{fig:c}
\end{subfigure}
\hfill
\begin{subfigure}{0.24\textwidth}
\centering
\begin{tikzpicture}[scale=0.6,line width=1pt]
\node at (1.5,1.25) {$\oplus,1$};
\node[inner sep=1pt] (00) at (0,0) {$(0,0)$};
\node[inner sep=1pt] (10) at (3,0) {$(1,0)$};
\node[inner sep=1pt] (11) at (3,2.5) {$(1,1)$};
\node[inner sep=1pt] (01) at (0,2.5) {$(0,1)$};
\draw[midarrow] (00) -- (01);
\draw[midarrow] (11) -- (01);
\draw[midarrow] (11) -- (10);
\draw[midarrow] (00) -- (10);
\end{tikzpicture}
\\
\vspace{1.5mm}
\caption{}
\label{fig:d}
\end{subfigure}
\caption{Example of Section \ref{sec:ex}: \subref{fig:a} interaction graph (positive interactions, except $f_4(x_3) = \neg x_3$); $\ASTG(N(I,(x_1,x_2)))$ on variables $3$ and $4$ for $f_3(x_1,x_4) = x_1\wedge x_4$ and $x_1=1$ \subref{fig:b}, $x_1=0$ \subref{fig:c}; \subref{fig:d} for $f_3(x_1,x_4)=x_1\oplus x_4$ and $x_1=1$.}
\label{fig:ex2}
\end{figure}

As an example, consider the Boolean network $N=(G,f)$ whose interaction graph $G$ is depicted in Figure \ref{fig:ex2}~\subref{fig:a}. Let $I=\{1,2\}$. If the Boolean functions are:
$$
\begin{array}{rlrl}
f_1(x_2) & = x_2 & \qquad f_2(x_1) & = x_1 \\
f_3(x_1,x_4) & = x_1\wedge x_4 & f_4(x_3) & = \neg x_3
\end{array}
$$
then $N[I]$ is a positive cycle between $1$ and $2$, and has two attractors, the fixed points $(0,0)$ and $(1,1)$. For $x_1=1$, $f_3(x_1,x_4)=x_4$, $N(I,(1,x_2))$ is a negative cycle between $3$ and $4$, and $\ASTG(N(I,(1,x_2)))$ is depicted in Figure \ref{fig:ex2}~\subref{fig:b}. For $x_1=0$, $f_3(x_1,x_4)=0$ and $\ASTG(N(I,(0,x_2)))$ is depicted in Figure \ref{fig:ex2}~\subref{fig:c}. Summing up, $N$ has two attractors: a cyclic attractor $\{(1,1)\}\times\B^{\{3,4\}}$ controlled by the fixed point $(1,1)$ of $N[I]$, and a fixed point $(0,0,0,1)$ controlled by the fixed point $(0,0)$ of $N[I]$.

If however $f_3(x_1,x_4)=x_1\oplus x_4$ is an exclusive or, then for $x_1=0$, $f_3(x_1,x_4)=x_4$. For $x_1=1$, $f_3(x_1,x_4)=\neg x_4$, hence $N(I,(1,x_2))$ is a positive cycle between $3$ and $4$ and $\ASTG(N(I,(1,x_2)))$ is depicted in Figure \ref{fig:ex2}~\subref{fig:d}. Then $N$ has three attractors: a cyclic attractor $\{(0,0)\}\times\B^{\{3,4\}}$ controlled by the fixed point $(0,0)$ of $N[I]$, and two fixed points $(1,1,0,1)$ and $(1,1,1,0)$ controlled by the fixed point $(1,1)$ of $N[I]$.

\subsection{Proof of Theorem \ref{thm:attractors}}
\label{sec:proof}

Let $\bigotimes_{a\in\attractors(N[I])}\attractors(N(I,a))$ denote the right-hand side $\{a\times b \mid a\in\attractors(N[I]), b\in\attractors(N(I,a))\}$.

\begin{enumerate}
\item Let us first prove that $\bigotimes_{a\in\attractors(N[I])}\attractors(N(I,a)) \subseteq \attractors(N)$. Let $a\in\attractors(N[I])$ and $b\in\attractors(N(I,a))$. To show that $a\times b$ is an attractor, we need to show that it is a maximal strongly connected subgraph.
\begin{enumerate}
\item We first prove strong connectedness: if $(x,y)$ and $(x',y')\in a\times b$, then $\ASTG(N)$ has a directed path from $(x,y)$ to $(x',y')$. Since $x$ and $x'$ both belong to the attractor $a\in\attractors(N[I])$, we have a path
$$
x\chemin{N[I]} x'
$$
which by Lemma \ref{lem:reachability} gives in particular a path
\begin{equation}
\label{eq:part2}
(x,y') \chemin{N} (x',y').
\end{equation}
On the other hand, since $y,y'\in b\in\attractors(N(I,a))$, we have a path
$$
y=y^0 \xrightarrow{N(I,a)} y^1 \xrightarrow{N(I,a)} \cdots \xrightarrow{N(I,a)} y^p=y'
$$
from $y$ to $y'$. By definition of $N(I,a)$, for each $i\in\{0,\ldots,p-1\}$, there exists $z^i\in a$ such that
$$
y^i \xrightarrow{N(I,z^i)} y^{i+1}.
$$
Since $z^i$ and $z^{i+1}$ both belong to the attractor $a\in\attractors(N[I])$, we also have a path
$$
z^i \chemin{N[I]} z^{i+1},
$$
and by combining these two paths, we obtain by Lemma \ref{lem:reachability} a path
$$
(z^i,y^i) \xrightarrow{N} (z^i,y^{i+1}) \chemin{N} (z^{i+1},y^{i+1})
$$
for each $i\in\{0,\ldots,p-1\}$. Since moreover $x\in a\in\attractors(N[I])$, we also have
$$
x \chemin{N[I]} z^0 \quad \text{and} \quad z^p \chemin{N[I]} x
$$
hence
\begin{equation}
\label{eq:part1}
(x,y)=(x,y^0) \chemin{N} (z^0,y^0) \chemin{N}\cdots\chemin{N}
(z^p,y^p) \chemin{N} (x,y^p)=(x,y').
\end{equation}
By combining paths (\ref{eq:part2}) and (\ref{eq:part1}), we get a directed path from $(x,y)$ to $(x',y')$ in $\ASTG(N)$, as expected.

\item Let us now prove maximality: if $(x,y)\in a\times b$ and
$$
(x,y) \xrightarrow{N} (x',y')
$$
for some $(x',y')\in\B^I\times\B^{V(G)\setminus I}$, then $(x',y')\in a\times b$. By Lemma \ref{lem:reachability}:
\begin{itemize}
\item either $y=y'$ and
$$
x \xrightarrow{N[I]} x'
$$
and then $x\in a\in\attractors(N[I])$ implies $x'\in a$;
\item or $x=x'$ and
$$
y \xrightarrow{N(I,x)} y' \quad \text{hence} \quad y \xrightarrow{N(I,a)} y'
$$
by definition of $N(I,a)$, and $y\in b\in\attractors(N(I,a))$ implies $y'\in b$.
\end{itemize}
In either case, $(x',y')\in a\times b$, as expected. This terminates the proof that $a\times b$ is an attractor of $N$.
\end{enumerate}

\item Let us now prove that $\attractors(N) \subseteq \bigotimes_{a\in\attractors(N[I])}\attractors(N(I,a))$. Assume $c\in\attractors(N)$, and let
\begin{align*}
a & = \{x\in\B^I \mid (x,y)\in c \text{ for some } y\in\B^{V(G)\setminus I}\} \\
b & = \{y\in\B^{V(G)\setminus I} \mid (x,y)\in c \text{ for some } x\in\B^I\}.
\end{align*}
\begin{enumerate}
\item Let us first show that $c=a\times b$. Since by construction of $a$ and $b$, we have $c\subseteq a\times b$, we just have to show that $a\times b\subseteq c$. To this end, let $x\in a$ and $y\in b$: there exist $y'\in\B^{V(G)\setminus I}$ and $x'\in\B^I$ such that both $(x,y')$ and $(x',y)\in c$. Therefore
$$
(x,y') \chemin{N} (x',y) \chemin{N} (x,y').
$$
By Corollary \ref{cor:reachability}, we have in particular a path
$$
x' \chemin{N[I]} x
$$
and by Lemma \ref{lem:reachability} a path \grosmot{$(x',y) \chemin{N} (x,y)$}. Since $(x',y)\in c\in\attractors(N)$, this implies $(x,y)\in c$. Therefore $a\times b\subseteq c$.

\item We now show that $a\in\attractors(N[I])$. If $x\in a$ and
$$
x \xrightarrow{N[I]} x'
$$
for some $x'\in\B^I$, then for any $y\in b$, $(x,y)\in a\times b$ and by Lemma \ref{lem:reachability}:
$$
(x,y) \xrightarrow{N} (x',y)
$$
so that $(x',y)\in a\times b\in\attractors(N)$, hence $x'\in a$. On the other hand, given $x,x'\in a$, again for any $y\in b$, both $(x,y)$ and $(x',y)\in a\times b\in\attractors(N)$, therefore
$$
(x,y) \xrightarrow{N} (x',y).
\vspace{2mm}
$$
By Lemma \ref{lem:reachability}, this means \grosmot{$x \xrightarrow{N[I]} x'$}, and $a$ is therefore maximally strongly connected, hence an attractor of $N[I]$.

\item Finally, we show that $b\in\attractors(N(I,a))$. Assume $y\in b$ and for some $x\in a$:
$$
y \xrightarrow{N(I,a)} y' \quad \text{hence} \quad y \xrightarrow{N(I,x)} y'
$$
Then $(x,y) \xrightarrow{N} (x,y')$ and $(x,y)\in a\times b\in\attractors(N)$, hence $(x,y')\in a\times b$ and $y'\in b$. On the other hand, given $y,y'\in b$, for any $x\in a$, both $(x,y)$ and $(x,y')\in a\times b\in\attractors(N)$, therefore
$$
(x,y) \xrightarrow{N} (x,y').
$$
By Lemma \ref{lem:reachability}, this means
$$
y \xrightarrow{N(I,x)} y' \quad \text{hence} \quad y \xrightarrow{N(I,a)} y'.
$$
This terminates the proof that $b$ is an attractor of $N(I,a)$.
\end{enumerate}
\end{enumerate}

\section{Generalized decompositions and strong modules}
\label{sec:gendecomposition}

Generalizing Section \ref{sec:decomposition}, let $I_1,\ldots,I_k$ be an ordered partition of $V(G)$ such that, for all $i<j$, $G$ has no directed path from a vertex of $I_j$ to a vertex of $I_i$. We call the tuple $(I_1,\ldots,I_k)$ a \emph{generalized decomposition} of $N$.

A generalized decomposition induces Boolean networks as follows. Given
$$
(x^1,\ldots,x^{k-1})\in\B^{I_1}\times\cdots\times\B^{I_{k-1}}
$$
we define networks $N_1,\ldots,N_k$ and $R_1,\ldots,R_k$ by:
$$
\begin{array}{rll}
R_1 & = N & \\
N_i & = R_i[I_i] & \quad \rdelim\}{2}{*}[\text{ for $1\leq i\leq k-1$}] \\
R_{i+1} & = R_i(I_i,x^i) & \\
N_k & = R_k. &
\end{array}
$$
The networks $N_1,\ldots,N_k$, with vertex sets $I_1,\ldots,I_k$, shall be called the \emph{networks induced by $N,I_1,\ldots,I_k,x^1,\ldots,x^{k-1}$}.

\begin{figure}[t]
\centering
\begin{tikzpicture}[scale=0.6,line width=1pt]
\draw (0,1) rectangle (3,8);
\draw[line width=1pt] (0,4) -- (3,4);
\draw[line width=1pt] (0,6) -- (3,6);
\node at (1.5,7) {$I_1$};
\node at (1.5,5) {$I_2$};
\node at (1.5,2.7) {$\vdots$};
\node[left] at (0,7) {$N_1=R_1[I_1]$};
\node[left] at (0,5) {$N_2=R_2[I_2]$};
\draw[decorate,decoration={brace,amplitude=10pt}] (-0.2,1) -- (-0.2,4);
\node[left] at (-0.7,2.5) {$R_3=R_2(I_2,a_2)$};
\draw[decorate,decoration={brace,amplitude=10pt}] (-4.7,1) -- (-4.7,6);
\node[left] at (-5.2,3.5) {$R_2=R_1(I_1,a_1)$};
\draw[decorate,decoration={brace,amplitude=10pt}] (-9.2,1) -- (-9.2,8);
\node[left] at (-9.7,4.5) {$R_1=N$};
\node[right] at (3,7) {$a_1$};
\node[right] at (3,5) {$a_2$};
\end{tikzpicture}
\caption{Networks induced by a generalized decomposition.}
\label{fig:generalized}
\end{figure}
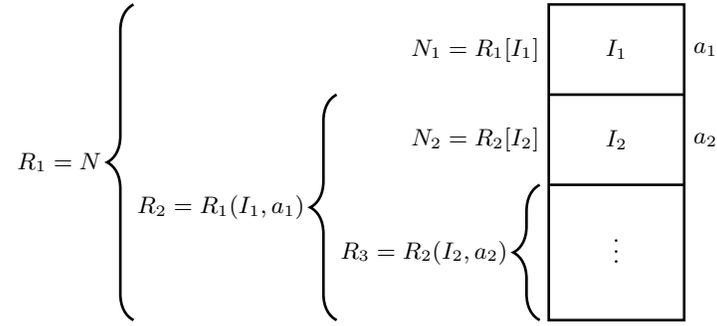

If $a_1\subseteq\B^{I_1},\ldots,a_{k-1}\subseteq\B^{I_{k-1}}$, we may define similarly $N_1,\ldots,N_k$ and $R_1,\ldots,R_k$ by letting $R_{i+1} = R_i(I_i,a_i)$ in the above definition. Again, the resulting networks $N_1,\ldots,N_k$, also with vertex sets $I_1,\ldots,I_k$, shall be called the \emph{networks induced by $N,I_1,\ldots,I_k,a_1,\ldots,a_{k-1}$} (Figure \ref{fig:generalized}).

\begin{prop}[Associativity]
\label{prop:ass}
Let $(I_1,I_2,I_3)$ be a generalized decomposition of a Boolean network $N$, $x^1\in\B^{I_1}$, $x^2\in\B^{I_2}$, $a_1\subseteq\B^{I_1}$ and $a_2\subseteq\B^{I_2}$. Then
\begin{align*}
N(I_1,x_1)[I_2] & = N[I_1\cup I_2](I_1,x_1) \\
N(I_1,x_1)(I_2,x_2) & = N(I_1\cup I_2,(x_1,x_2))
\end{align*}
and
\begin{align*}
N(I_1,a_1)[I_2] & = N[I_1\cup I_2](I_1,a_1) \\
N(I_1,a_1)(I_2,a_2) & = N(I_1\cup I_2,a_1\times a_2).
\end{align*}
\end{prop}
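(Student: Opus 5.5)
We prove the four identities by unfolding the definitions. A Boolean network is determined by its interaction graph together with its family of local functions; equivalently, by the folklore correspondence of Section \ref{sec:astg}, by its asynchronous state transition graph. For the two identities with single states $x_1,x_2$ we compare graphs and local functions directly. For the two identities with sets $a_1,a_2$ we compare state transition graphs, since $N(I,a)$ is defined through the edge union of the $\ASTG(N(I,x))$.

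\emph{Step 1: pointwise identities.} Write $V=V(G)$. Both sides of $N(I_1,x_1)[I_2]=N[I_1\cup I_2](I_1,x_1)$ have vertex set $I_2$, and both interaction graphs are the subgraph of $G$ induced by $I_2$. For $v\in I_2$ and $y\in\B^{I_2}$, the left-hand side computes $f^{x_1}_v$ and restricts it to $I_2$. The right-hand side first restricts $f_v$ to the network on $I_1\cup I_2$ and then substitutes $x_1$. Both give $f_v(x_1,y)$.

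The input coordinates of $v\in I_2$ lying outside $I_1\cup I_2$ belong to $I_3$. Since $(I_1,I_2,I_3)$ is a generalized decomposition, $G$ has no edge from $I_3$ to $I_2$, so these coordinates do not occur. This is exactly what makes $N[I_1\cup I_2]$ and the restriction to $I_2$ well defined as decompositions.

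Similarly, for $v\in I_3$ we have
$$(f^{x_1})^{x_2}_v(z)=f_v(x_1,x_2,z)=f^{(x_1,x_2)}_v(z),$$
and the interaction graphs on both sides are $G[I_3]$. We also check that $(I_1,I_2\cup I_3)$, $(I_2,I_3)$ within $N(I_1,x_1)$, and $(I_1\cup I_2,I_3)$ are all decompositions, using the no-backward-path hypothesis. This gives the first two identities.

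\emph{Step 2: the identities with sets.} For $N(I_1,a_1)[I_2]=N[I_1\cup I_2](I_1,a_1)$, the edges $y\to y'$ of the left-hand side on $\B^{I_2}$ are exactly those changing one coordinate $v\in I_2$. Because $v$'s update only reads $I_1\cup I_2$ coordinates, such an edge exists iff $f_v(x_1,y)=y'_v\neq y_v$ for some $x_1\in a_1$. The same description holds for the right-hand side, by the edge-union definition together with Step 1 applied pointwise, i.e.\ $\ASTG(N(I_1,x_1)[I_2])=\ASTG(N[I_1\cup I_2](I_1,x_1))$. So both sides are the same edge union over $x_1\in a_1$.

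For the last identity, an edge $z\to z'$ in $\ASTG(N(I_1,a_1)(I_2,a_2))$ is, by definition, an edge of $\ASTG(N(I_1,a_1)(I_2,x_2))$ for some $x_2\in a_2$. Unfolding $N(I_1,a_1)$ means it changes some $v\in I_3$ with $f_v(x_1,x_2,z)=z'_v\ne z_v$ for some $x_1\in a_1$. This is precisely an edge of $\ASTG(N(I_1\cup I_2,(x_1,x_2)))$ with $(x_1,x_2)\in a_1\times a_2$, by Step 1.

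\emph{Main obstacle.} The delicate point is the last identity. The network $N(I_1,a_1)$ is not given by local functions of the form $f_v(x_1,\cdot)$; it is defined only through its transition graph. So before applying $(I_2,x_2)$ to it, we must make its local functions explicit.

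For $v\in I_2\cup I_3$, the local function of $N(I_1,a_1)$ at state $w$ returns $\neg w_v$ iff some $x_1\in a_1$ has $f_v(x_1,w)\neq w_v$, and returns $w_v$ otherwise. Its dependence is still on $G^-(v)\setminus I_1$, plus possibly $v$ itself; the latter is harmless since non-functional edges and loops are allowed. Substituting $x_2$ into this function and reading off the transitions gives: there is a transition at $v$ iff there exist $x_1\in a_1$ and $x_2\in a_2$ with $f_v(x_1,x_2,z)\ne z_v$. This matches the right-hand side.

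The graph-level bookkeeping, namely which induced subgraph carries which loops, needs a remark that equality is meant up to these harmless non-functional edges.
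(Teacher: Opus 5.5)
Your proposal is correct and follows essentially the paper's route: you unfold the definitions for the pointwise identities, and for the set identities you describe $N(I_1,a_1)$ by local functions that flip $w_v$ exactly when some $x_1\in a_1$ triggers a change, which is the $|a_1|$-fold version of the paper's $h_v(x)\oplus x_v=(f_v(x)\oplus x_v)\vee(g_v(x)\oplus x_v)$. Your remark that these functions may depend on $w_v$ itself, so that equality holds only up to non-functional edges and loops, addresses a point the paper passes over when it writes $\ASTG(G,f)\cup\ASTG(G,g)=\ASTG(G,h)$ with the same $G$.
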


\begin{proof}
Consider the two equalities with $x_1,x_2$. Let $N=(G,f)$.
$$
\begin{array}{rlrl}
N(I_1,x_1) & = (G[I_2\cup I_3],(f^{x_1}_v)_{v\in I_2\cup I_3})
& \quad
N(I_1,x_1)[I_2] & = (G[I_2],(f^{x_1}_v)_{v\in I_2})
\\
N[I_1\cup I_2] & = (G[I_1\cup I_2],(f_v)_{v\in I_1\cup I_2})
& \quad
N[I_1\cup I_2](I_1,x_1) & = (G[I_2],(f^{x_1}_v)_{v\in I_2})
\end{array}
$$
See Figure \ref{fig:ass}. Moreover:
\begin{align*}
N(I_1,x_1)(I_2,x_2) & = (G[I_3],((f_v^{x_1})^{x_2})_{v\in I_3}) \\
& = (G[I_3],(f_v^{(x_1,x_2)})_{v\in I_3}) \\
& = N(I_1\cup I_2,(x_1,x_2))
\end{align*}
because $(f_v^{x_1})^{x_2}=f_v^{(x_1,x_2)}:y\in\B^{I_3}\mapsto f_v(x_1,x_2,y)$. Now, for any two networks $(G,f)$ and $(G,g)$ with the same interaction graph, we have:
$$
\ASTG(G,(f_v)_{v\in V(G)}) \cup \ASTG(G,(g_v)_{v\in V(G)}) = \ASTG(G,(h_v)_{v\in V(G)})
$$
where $\cup$ denotes here edge union, $h$ is defined by $h_v(x)\oplus x_v = (f_v(x)\oplus x_v) \vee (g_v(x)\oplus x_v)$ and $\oplus$ is exclusive or, and the two equalities with $a_1,a_2$ follow from the definition of $N(I,a)$.
\end{proof}

\begin{figure}[t]
\centering
\begin{tikzpicture}[scale=0.6,line width=1pt]
\draw (0,0) rectangle (3,6);
\draw[line width=1pt] (0,2) -- (3,2);
\draw[line width=1pt] (0,4) -- (3,4);
\node at (1.5,5) {$I_1$};
\node at (1.5,3) {$I_2$};
\node at (1.5,1) {$I_3$};
\draw[decorate,decoration={brace,amplitude=10pt}] (-0.2,0) -- (-0.2,4);
\node[left] at (-0.7,2) {$N(I_1,x_1)$};
\node[right] (A) at (3.5,2.2) {$N(I_1,x_1)[I_2]$};
\node at (6.5,3) {$=$};
\node[left] (B) at (9.5,3.8) {$N[I_1\cup I_2](I_1,x_1)$};
\node (AA) at (3,3) {};
\draw[->] (A.north) to[bend left=-25] (AA);
\node (BB) at (10,3) {};
\draw[->] (B.south) to[bend left=-25] (BB);
\draw (10,0) rectangle (13,6);
\draw[line width=1pt] (10,2) -- (13,2);
\draw[line width=1pt] (10,4) -- (13,4);
\node at (11.5,5) {$I_1$};
\node at (11.5,3) {$I_2$};
\node at (11.5,1) {$I_3$};
\draw[decorate,decoration={brace,mirror,amplitude=10pt}] (13.2,2) -- (13.2,6);
\node[right] at (13.7,4) {$N[I_1\cup I_2]$};
\end{tikzpicture}
\caption{An associativity relation from Proposition \ref{prop:ass}.}
\label{fig:ass}
\end{figure}
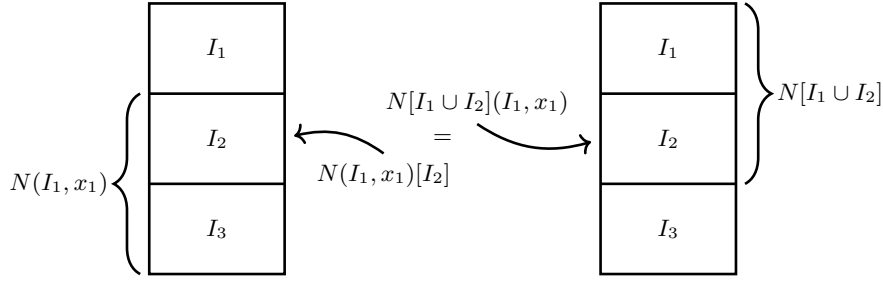

\begin{prop}[Commutativity]
\label{prop:com}
Let $(I_1,\ldots,I_k)$ be a generalized decomposition of a Boolean network $N=(G,f)$ and $i\in\{1,\ldots,k-1\}$ be such that $G$ has no edge from $I_i$ to $I_{i+1}$. Let $(x^1,\ldots,x^{k-1})\in\B^{I_1}\times\cdots\times\B^{I_{k-1}}$, $N_1,\ldots,N_k$ be the networks induced by
$$
N,I_1,\ldots,I_{i-1},I_i,I_{i+1},I_{i+2},\ldots,I_k,x^1,\ldots,x^{i-1},x^i,x^{i+1},x^{i+2},\ldots,x^{k-1}
$$
and $N'_1,\ldots,N'_k$ be the networks induced by
$$
N,I_1,\ldots,I_{i-1},I_{i+1},I_i,I_{i+2},\ldots,I_k,x^1,\ldots,x^{i-1},x^{i+1},x^i,x^{i+2},\ldots,x^{k-1}.
$$
Then $N_i=N'_{i+1}$, $N_{i+1}=N'_i$, and $N_j=N'_j$ for any $j\not\in\{i,i+1\}$. 

The similar statement for $a_1\subseteq\B^{I_1},\ldots,a_{k-1}\subseteq\B^{I_{k-1}}$ holds as well.
\end{prop}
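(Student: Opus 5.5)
The plan is to reduce everything to the associativity relations of Proposition \ref{prop:ass} together with Proposition \ref{lem:indep}, after noting that swapping $I_i$ and $I_{i+1}$ again gives a generalized decomposition. This is immediate: $G$ has no edge from $I_i$ to $I_{i+1}$ by hypothesis, and no path from $I_{i+1}$ to $I_i$ because the original tuple is a generalized decomposition. So there is no path in either direction between these two blocks, and the other path conditions are unchanged.

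For $j<i$, the recursive definitions of $R_j$ and $N_j$ only involve $I_1,\ldots,I_j$ and $x^1,\ldots,x^{j-1}$, so $N_j=N'_j$ and $R_j=R'_j$ by induction; in particular $R_i=R'_i$. For $j\geq i+2$, I will show $R_{i+2}=R'_{i+2}$, and then the definitions coincide from there on. By iterating the second associativity equality of Proposition \ref{prop:ass}, applied to $R_i$ with the generalized decomposition $(I_i,I_{i+1},I_{i+2}\cup\cdots\cup I_k)$, we get
$$R_{i+2}=R_i(I_i,x^i)(I_{i+1},x^{i+1})=R_i(I_i\cup I_{i+1},(x^i,x^{i+1})).$$
Similarly, $R'_{i+2}=R_i(I_{i+1}\cup I_i,(x^{i+1},x^i))$. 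These are the same network, since $N(I,x)$ depends only on the set $I$ and the state $x\in\B^I$, and the pair notation is just a reordering of coordinates.

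The core is the middle pair. First, $N_i=R_i[I_i]$ and $N'_{i+1}=R_i(I_{i+1},x^{i+1})[I_i]$. To compare them, use the first associativity equality in the form $R_i(I_{i+1},x^{i+1})[I_i]=R_i[I_{i+1}\cup I_i](I_{i+1},x^{i+1})$. This is legitimate because $(I_{i+1},I_i,\ldots)$ is a generalized decomposition. Inside $R_i[I_{i}\cup I_{i+1}]$ there is no edge from $I_{i+1}$ to $I_i$, so Proposition \ref{lem:indep} (with the roles of $I$ and its complement as in that statement) gives
$$R_i[I_i\cup I_{i+1}](I_{i+1},x^{i+1})=R_i[I_i\cup I_{i+1}][I_i]=R_i[I_i].$$
Hence $N'_{i+1}=N_i$, and symmetrically $N_{i+1}=R_i(I_i,x^i)[I_{i+1}]=R_i[I_{i+1}]=N'_i$, using that there is no edge from $I_i$ to $I_{i+1}$.

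The main obstacle is bookkeeping. Proposition \ref{prop:ass} is stated for a decomposition $(I_1,I_2,I_3)$ of $N$ itself, so it must be applied to $R_i$ with blocks $I_i$, $I_{i+1}$ and the union of the remaining ones, possibly empty, and one must check each is a genuine decomposition in the paper's sense (nonempty strict subsets, no edge back). If the remaining union is empty, I would handle that case directly from the definitions, which is trivial. For the set version with $a_1,\ldots,a_{k-1}$, the same argument goes through verbatim using the $a$-versions of Proposition \ref{prop:ass}. One also needs the analogue of Proposition \ref{lem:indep} for $N(I,a)$, which follows because an edge union of identical ASTGs is that ASTG (for $a\neq\emptyset$). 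Finally, $a_i\times a_{i+1}$ and $a_{i+1}\times a_i$ denote the same subset of $\B^{I_i\cup I_{i+1}}$.
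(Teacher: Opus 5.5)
Your argument is correct and is essentially the paper's proof. The paper just says the result follows from Proposition~\ref{lem:indep}; you unpack this carefully, use Proposition~\ref{prop:ass} only for bookkeeping, and rightly flag the degenerate case $i=k-1$ and the need for $a_i\neq\emptyset$ in the set version. The one step you call immediate needs a line: a path from $I_i$ to $I_{i+1}$ cannot visit any other block, by the path conditions of the original decomposition, so it would have to use an edge from $I_i$ to $I_{i+1}$ directly, and this is what upgrades ``no edge'' to ``no path''.
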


\begin{proof}
This follows from Proposition \ref{lem:indep}.
\end{proof}

Propositions \ref{prop:ass} and \ref{prop:com} entail the following equalities by recurrence on $i$.

\begin{cor}
\label{prop:gendec}
With the above notations, we have, for $1\leq i\leq k$:
\begin{align*}
R_i & = N(I_1,a_1)\cdots(I_{i-1},a_{i-1}) \\
& = N(I_1\cup\cdots\cup I_{i-1},a_1\times\cdots\times a_{i-1}) \\
N_i & = N(I_1,a_1)\cdots(I_{i-1},a_{i-1})[I_i] \\
& = N[I_1\cup\cdots\cup I_{i}](I_1\cup\cdots\cup I_{i-1},a_1\times\cdots\times a_{i-1}).
\end{align*}
\end{cor}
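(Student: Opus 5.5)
We prove the four equalities together by induction on $i$, using only the recursive definitions $R_1=N$, $R_{i+1}=R_i(I_i,a_i)$, $N_i=R_i[I_i]$ and the two associativity identities of Proposition \ref{prop:ass} in their set versions (those with $a_1,a_2$). For $i=1$ the claims are conventions: $R_1=N$ is $N$ with no controls applied, which is also $N(\emptyset,\cdot)$ with an empty product of controls, and $N_1=N[I_1]$ is the statement with $I_1\cup\cdots\cup I_0=\emptyset$. It is convenient to take these empty cases as the definition of the notation rather than to prove them.

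\textbf{First line for $R_i$.} The equality $R_i = N(I_1,a_1)\cdots(I_{i-1},a_{i-1})$ is immediate by unfolding the definition $R_{i+1}=R_i(I_i,a_i)$ $i-1$ times. The same unfolding gives the first line for $N_i$, since $N_i=R_i[I_i]$. Each step is legitimate because $(I_i, I_{i+1}\cup\cdots\cup I_k)$ is a decomposition of $R_i$. Indeed, $R_i$ lives on $I_i\cup\cdots\cup I_k$, and its interaction graph is an induced subgraph of $G$. By the hypothesis on generalized decompositions there is no path, hence no edge, from a later block back to $I_i$.

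\textbf{Second line for $R_i$.} Assume $R_i = N(J_{i-1},A_{i-1})$, with $J_{i-1}=I_1\cup\cdots\cup I_{i-1}$ and $A_{i-1}=a_1\times\cdots\times a_{i-1}$. Then
\[
R_{i+1}=N(J_{i-1},A_{i-1})(I_i,a_i).
\]
We apply the second set identity of Proposition \ref{prop:ass} to the generalized decomposition $(J_{i-1},I_i,I_{i+1}\cup\cdots\cup I_k)$ of $N$, which is a generalized decomposition since the blocks are unions of consecutive blocks of the original one. This yields
\[
R_{i+1}=N(J_i,A_{i-1}\times a_i)=N(J_i,A_i).
\]

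\textbf{Second line for $N_i$.} We have
\[
N_i=R_i[I_i]=N(J_{i-1},A_{i-1})[I_i].
\]
The first set identity of Proposition \ref{prop:ass}, applied to the same three-block decomposition $(J_{i-1},I_i,\ldots)$, turns this into $N[J_{i-1}\cup I_i](J_{i-1},A_{i-1})$, which is the claim. For $i=k$ the third block is empty, and Proposition \ref{prop:ass} must be read with $I_3=\emptyset$. In that case $N_k=R_k$ and $N[J_k]=N$, so the identity reduces directly to the formula for $R_k$.

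\textbf{Main obstacle.} The work is bookkeeping rather than mathematics. One must check that Proposition \ref{prop:ass} really applies to merged blocks (the three-part decomposition condition) and that degenerate empty blocks cause no trouble. A further point is that the product $a_1\times\cdots\times a_{i-1}$ must be identified with a subset of $\B^{J_{i-1}}$ through the implicit reordering of coordinates. I would state this identification once and use it throughout. Commutativity (Proposition \ref{prop:com}) is not needed for these equalities, but it shows that the result does not depend on the chosen linear order of incomparable blocks.
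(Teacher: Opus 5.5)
Your proposal is correct and follows the paper's route: a recurrence on $i$ that unfolds the definition of $R_i$, then merges the controls using the set versions of Proposition \ref{prop:ass}, applied to the three-block decomposition $(I_1\cup\cdots\cup I_{i-1},\,I_i,\,I_{i+1}\cup\cdots\cup I_k)$. The paper also cites Proposition \ref{prop:com}, but you are right that these four equalities do not need it.
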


\subsection{Strong modules}
\label{sec:strong}

Recall that the \emph{strong connectedness} equivalence relation $\sim$ on the vertex set $V(G)$ of a directed graph $G$ is defined as follows: for $u,v\in V(G)$, $u\sim v$ if and only if $G$ has directed paths from $u$ to $v$ and from $v$ to $u$. The equivalence classes of $\sim$ are the strongly connected components of $G$, and the \emph{condensation} of a directed graph $G$ is the directed acyclic graph $G/{\sim}$ with vertices the strongly connected components.

If $N=(G,f)$ is a Boolean network, we shall call \emph{strong modules} of $N$ the strongly connected components of $G$.

The condensation $G/{\sim}$ of $G$ induces an ordering on strongly connected components $H,H'$ of $G$: $H\leq H'$ when there is a directed path from (the vertex representing) $H$ to $H'$ in $G/{\sim}$. Extending this partial order to some linear order makes the vertex sets of strong modules of $N=(G,f)$ into a generalized decomposition of $N$. Conversely, if $(I_1,\ldots,I_k)$ is a generalized decomposition, the vertex set of any component of $G$ is included into some $I_i$.

\subsection{Generalized decompositions and attractors}

The following Theorem relates asynchronous attractors to generalized decompositions, in particular those arising from strong modules. It is proved by recurrence on $k$, using Corollary \ref{prop:gendec} and Theorem \ref{thm:attractors} (see Figure \ref{fig:attract}).

\begin{figure}[t]
\centering
\qquad\qquad\qquad
\begin{tikzpicture}[scale=0.6,line width=1pt]
\draw (0,0) rectangle (5,10);
\draw (0,4) -- (5,4);
\draw (0,6) -- (5,6);
\draw (0,8) -- (5,8);
\node at (1.5,9) {$N_1$};
\node at (1.5,7) {$N_2^{a_1}$};
\node at (1.5,5) {$N_3^{a_1,a_2}$};
\node at (1.5,2.2) {$\vdots$};
\node[right] at (2.6,8.8) {$a_1$};
\draw plot [smooth cycle] coordinates {
    (2.7,8.5)
    (3.7,8.35)
    (4.3,8.4)
    (4.3,8.6)
    (3.4,8.9)
    (2.9,9.7)
    (2.7,9.7)
};
\draw[->] (3.4,8.6) -- (3.6,7.6);
\draw[->] (3.5,8.5) to[bend left=60, looseness=2.5] (4.5,5);
\draw[->] (3.6,8.6) to[bend left=100, looseness=2] (4.5,3);
\node[right] at (2.6,6.8) {$a_2$};
\draw plot [smooth cycle] coordinates {
    (2.7,6.5)
    (3.7,6.35)
    (4.3,6.4)
    (4.3,6.6)
    (3.4,6.9)
    (2.9,7.7)
    (2.7,7.7)
};
\draw[->] (3.4,6.6) -- (3.6,5.6);
\draw[->] (3.6,6.6) to[bend left=80, looseness=1.7] (3.5,3);
\node[right] at (2.6,4.8) {$a_3$};
\draw plot [smooth cycle] coordinates {
    (2.7,4.5)
    (3.7,4.35)
    (4.3,4.4)
    (4.3,4.6)
    (3.4,4.9)
    (2.9,5.7)
    (2.7,5.7)
};
\draw[->] (3.4,4.6) -- (3.6,3.6);
\end{tikzpicture}
\caption{Attractors of $N$ as products of attractors of constrained networks $N_1$, $N_2^{a_1}$, $N_3^{a_1,a_2},\dots$ in Theorem \ref{thm:condensation}.}
\label{fig:attract}
\end{figure}
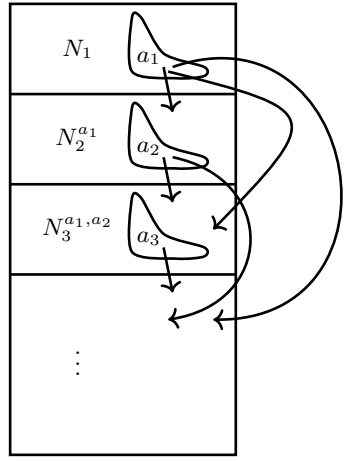

\begin{thm}
\label{thm:condensation}
Let $N$ be a Bool\-ean network, $(I_1,\ldots,I_k)$ be a generalized decomposition of $N$. Then $\attractors(N)$ is the set of Cartesian products $a_1\times\cdots\times a_k$ for
\begin{align*}
& a_1 \in \attractors(N_1) \\
& a_i \in \attractors(N_i^{a_1,\ldots,a_{i-1}}) \quad \text{if $2\leq i\leq k$},
\end{align*}
where
\begin{align*}
N_1 & = N[I_1] \\
N_i^{a_1,\ldots,a_{i-1}} & = N(I_1,a_1)\cdots(I_{i-1},a_{i-1})[I_i] \quad \text{if $1<i<k$} \\
N_k^{a_1,\ldots,a_{k-1}} & = N(I_1,a_1)\cdots(I_k,a_k),
\end{align*}
and $\attractors(N)$ is therefore in bijective correspondence with the dependent sum construction
\begin{align*}
& \sum_{a_1\in\attractors(N_1)}
\quad
\sum_{a_2\in\attractors\left(N_2^{a_1}\right)}
\quad
\cdots
\sum_{a_{k-1}\in\attractors\left(N_{k-1}^{a_1,\ldots,a_{k-2}}\right)}
\attractors(N_k^{a_1,\ldots,a_{k-1}}).
\end{align*}
\end{thm}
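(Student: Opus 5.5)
The proof goes by induction on $k$, with Theorem \ref{thm:attractors} doing the work at each step and Corollary \ref{prop:gendec} identifying the networks involved. (The last line of the statement should read $N_k^{a_1,\ldots,a_{k-1}} = N(I_1,a_1)\cdots(I_{k-1},a_{k-1})$; this is what $R_k=N_k$ gives in the definition of induced networks.)

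For $k=1$ there is nothing to prove, since $N_1=N$. For $k=2$ the claim is exactly Theorem \ref{thm:attractors} applied to the decomposition $(I_1,I_2)$, because $N_2^{a_1}=N(I_1,a_1)$. One check is needed: the decomposition convention of Section \ref{sec:decomposition} must match. Theorem \ref{thm:attractors} uses $N[I]$ as the uncontrolled part and $N(I,x)$ as the controlled part, so we need that $G$ has no edge from $V(G)\setminus I$ to $I$. This is guaranteed for $I=I_1$ by the definition of generalized decomposition, since there is no path from $I_j$ to $I_1$ for $j>1$.

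For the inductive step with $k\geq 3$, I would apply Theorem \ref{thm:attractors} to the decomposition $(I_1,\,I_2\cup\cdots\cup I_k)$. This gives that $\attractors(N)$ consists of the sets $a_1\times b$ with $a_1\in\attractors(N[I_1])$ and $b\in\attractors(N(I_1,a_1))$. The network $R_2=N(I_1,a_1)$ has vertex set $I_2\cup\cdots\cup I_k$. The tuple $(I_2,\ldots,I_k)$ is a generalized decomposition of $R_2$: its interaction graph is the induced subgraph $G[I_2\cup\cdots\cup I_k]$, and directed paths in an induced subgraph are paths in $G$. The induction hypothesis therefore describes $\attractors(R_2)$ as the products $a_2\times\cdots\times a_k$ with $a_i\in\attractors\bigl(R_2(I_2,a_2)\cdots(I_{i-1},a_{i-1})[I_i]\bigr)$ for $i<k$, and the analogous condition for $i=k$. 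By the definitions, $R_2(I_2,a_2)\cdots(I_{i-1},a_{i-1})[I_i]$ is literally $N(I_1,a_1)(I_2,a_2)\cdots(I_{i-1},a_{i-1})[I_i]=N_i^{a_1,\ldots,a_{i-1}}$. Combining the two descriptions gives the stated set of products $a_1\times\cdots\times a_k$.

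The bijection with the iterated dependent sum then follows by sending $(a_1,\ldots,a_k)$ to $a_1\times\cdots\times a_k$. This map is injective because every attractor is nonempty: the factors are recovered as the coordinate projections onto $\B^{I_i}$.

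The main obstacle is bookkeeping rather than substance. One must verify that the iterated controlled networks produced by the recursion coincide with those in the statement. The paper's notation leaves open whether the sets $a_i$ are projected as subsets of $\B^{I_i}$ or taken as products. This is where Corollary \ref{prop:gendec} and Proposition \ref{prop:ass} enter: they give $N(I_1,a_1)\cdots(I_{i-1},a_{i-1})=N(I_1\cup\cdots\cup I_{i-1},a_1\times\cdots\times a_{i-1})$. Hence an alternative induction that peels off the last block $I_k$ rather than the first yields the same networks, and the two descriptions agree. I would also check that the attractors of $N(I_1,a_1)$ are nonempty subsets, so that the projections are well defined.
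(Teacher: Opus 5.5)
Your proof is correct and follows the paper's own sketch: an induction on $k$ that applies Theorem \ref{thm:attractors} at each step. You split off $I_1$ and apply the induction hypothesis to $R_2=N(I_1,a_1)$ with $(I_2,\ldots,I_k)$, so the controlled networks you obtain are literally the $N_i^{a_1,\ldots,a_{i-1}}$ and you do not need Corollary \ref{prop:gendec}, which the paper cites and which becomes necessary if one splits instead as $(I_1\cup\cdots\cup I_{k-1},I_k)$. One harmless precision: the edge union defining $R_2$ can require self-loops absent from $G[I_2\cup\cdots\cup I_k]$ (when two states of $a_1$ push some variable in opposite directions); self-loops create no paths between distinct blocks, so $(I_2,\ldots,I_k)$ is still a generalized decomposition of $R_2$.
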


\subsection{Example}
\label{sec:ex3}

Let us consider again the example of Figure \ref{fig:ex2}, extended with a third layer: see Figure \ref{fig:ex3}~\subref{fig:aa}. Let $I_1=\{1,2\}$, $I_2=\{3,4\}$, $I_3=\{5,6\}$, and the Boolean functions be given by:
$$
\begin{array}{rlrl}
f_1(x_2) & = x_2 & \qquad f_2(x_1) & = x_1 \\
f_3(x_1,x_4) & = x_1\wedge x_4 & f_4(x_3) & = \neg x_3 \\
f_5(x_3,x_6) & = x_3\wedge x_6 & f_6(x_5) & = x_5.
\end{array}
$$
As in Figure \ref{fig:ex2}, $N_1=N[I_1]$ has two attractors $a_1=\{(0,0)\}$ and $a'_1=\{(1,1)\}$, $N_2^{a_1}=N(I_1,a_1)[I_2])$ has a single attractor $a_2=\{(0,1)\}$ and \grosmot{$N_2^{a'_1}=N(I_1,a'_1)[I_2])$} has a single attractor $a'_2=\B^{\{3,4\}}$. Now, $N_3^{a_1,a_2}=N(I_1,a_1)(I_2,a_2)[I_3])$ has a single attractor $a_3=\{(0,0)\}$. For \grosmot{$N_3^{a'_1,a'_2}=N(I_1,a'_1)(I_2,a'_2)[I_3])$}, note that
$$
\ASTG(N_3^{a'_1,a'_2})=\bigcup_{(x_3,x_4)\in a'_2}\ASTG(N_3^{a'_1,(x_3,x_4)}),
$$
which simply equals $\ASTG(N_3^{a'_1,(1,x_4)})$ and corresponds to a positive cycle between $5$ and $6$, therefore \grosmot{$N_3^{a'_1,a'_2}$} has two fixed points $a'_3=\{(0,0)\}$ and $a''_3=\{(1,1)\}$. Summing up, $N$ has three attractors:
\begin{align*}
a_1\times a_2\times a_3&=\{(0,0,0,1,0,0)\} \\
a'_1\times a'_2\times a'_3&=\{(0,0)\}\times\B^{\{3,4\}}\times\{(0,0)\} \\
a'_1\times a'_2\times a''_3&=\{(0,0)\}\times\B^{\{3,4\}}\times\{(1,1)\}.
\end{align*}
Condider now the variant with interaction graph depicted in Figure \ref{fig:ex3}~\subref{fig:bb} and $f_6(x_5) = x_5\vee x_4$. Then \grosmot{$N_3^{a_1,a_2}$} has a unique fixed point $a''_3=\{(0,1)\}$, and \grosmot{$N_3^{a'_1,a'_2}$}, depicted in Figure \ref{fig:ex3}~\subref{fig:cc}, has a single attractor $a'''_3=\{(0,0),(0,1),(1,1)\}$. Finally, $N$ has then two attractors $a_1\times a_2\times a''_3$ and $a'_1\times a'_2\times a'''_3$.

\begin{figure}[t]
\centering
\begin{subfigure}{0.3\textwidth}
\centering
\begin{tikzpicture}[scale=0.6,line width=1pt]
\node (3) at (0,0) {$3$};
\node (4) at (3,0) {$4$};
\node (2) at (3,2) {$2$};
\node (1) at (0,2) {$1$};
\node (5) at (0,-2) {$5$};
\node (6) at (3,-2) {$6$};
\draw[->] (1) -- (3) node[pos=0.9, above=2pt, right=1pt] {$\wedge$};
\draw[->] (3) -- (5) node[pos=0.9, above=2pt, right=1pt] {$\wedge$};
\draw[->,bend right=20] (1) to (2);
\draw[->,bend right=20] (2) to (1);
\draw[->] (3) edge[bend right=20] node[midway,below=-3pt]{$-$} (4);
\draw[->,bend right=20] (4) to (3);
\draw[->,bend right=20] (5) to (6);
\draw[->,bend right=20] (6) to (5);
\end{tikzpicture}
\caption{}
\label{fig:aa}
\end{subfigure}
\hfill
\begin{subfigure}{0.3\textwidth}
\centering
\begin{tikzpicture}[scale=0.6,line width=1pt]
\node (3) at (0,0) {$3$};
\node (4) at (3,0) {$4$};
\node (2) at (3,2) {$2$};
\node (1) at (0,2) {$1$};
\node (5) at (0,-2) {$5$};
\node (6) at (3,-2) {$6$};
\draw[->] (1) -- (3) node[pos=0.9, above=2pt, right=1pt] {$\wedge$};
\draw[->] (3) -- (5) node[pos=0.9, above=2pt, right=1pt] {$\wedge$};
\draw[->] (4) -- (6) node[pos=0.9, above=1pt, left=1pt] {$\vee$};
\draw[->,bend right=20] (1) to (2);
\draw[->,bend right=20] (2) to (1);
\draw[->] (3) edge[bend right=20] node[midway,below=-3pt]{$-$} (4);
\draw[->,bend right=20] (4) to (3);
\draw[->,bend right=20] (5) to (6);
\draw[->,bend right=20] (6) to (5);
\end{tikzpicture}
\caption{}
\label{fig:bb}
\end{subfigure}
\hfill
\begin{subfigure}{0.3\textwidth}
\centering
\begin{tikzpicture}[scale=0.6,line width=1pt]
\node[inner sep=1pt] (00) at (0,0) {$(0,0)$};
\node[inner sep=1pt] (10) at (2.8,0) {$(1,0)$};
\node[inner sep=1pt] (11) at (2.8,2.7) {$(1,1)$};
\node[inner sep=1pt] (01) at (0,2.7) {$(0,1)$};
\draw[midarrow] (00) -- (01);
\draw[midarrow] (01) -- (00);
\draw[midarrow] (01) -- (11);
\draw[midarrow] (11) -- (01);
\draw[midarrow] (10) -- (11);
\draw[midarrow] (10) -- (00);
\draw[dashed, thick, rounded corners]
([yshift=-1mm]00.south)
to ([xshift=-1mm,yshift=-1mm]00.south west)
to ([xshift=-1mm]00.west)
to ([xshift=-1mm]01.west)
to ([xshift=-1mm,yshift=1mm]01.north west)
to ([yshift=1mm]01.north)
to ([yshift=1mm]11.north)
to ([xshift=1mm,yshift=1mm]11.north east)
to ([xshift=1mm]11.east)
to ([xshift=1mm,yshift=-1mm]11.south east)
to ([yshift=-1mm]11.south)
to ([xshift=1mm,yshift=-1mm]01.south east)
to ([xshift=1mm]00.east)
to ([xshift=1mm,yshift=-1mm]00.south east)
-- cycle;
\end{tikzpicture}
\\
\vspace{4mm}
\caption{}
\label{fig:cc}
\end{subfigure}
\caption{Example of Section \ref{sec:ex3}: \subref{fig:a} interaction graph; \subref{fig:b}--\subref{fig:c} variant of interaction graph and \grosmot{$\ASTG(N_3^{a'_1,a'_2})$} on variables $5$ and $6$, with attractor $a'''_3$ indicated by a dashed loop.}
\label{fig:ex3}
\end{figure}
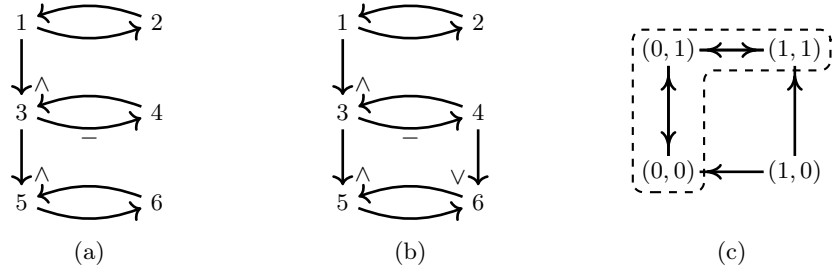

\section{Complexity}
\label{sec:complexity}

Theorem \ref{thm:condensation} tells us in particular that attractors of an $n$-dimensional network $N$ are Cartesian products $a_1\times\cdots\times a_k$. It therefore enables to avoid the exponential blow-up inherent in enumerating product-like structures by using factorized representations.

\subsection{Sparse Boolean networks with bounded strong modules}

A family of Boolean networks (of all dimensions) will be said to have \emph{bounded strong modules} when strong modules have dimension bounded by a constant (independent of dimension), and it will be said to be \emph{sparse} if the interaction graphs have in-degree bounded by a constant.

\begin{thm}
\label{th:complexity}
Sparse Boolean networks with bounded strong modules have (factorized representations of) attractors computable in polynomial time in the network's dimension.
\end{thm}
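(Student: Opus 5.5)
We compute the strong modules of $G$ by a standard linear-time algorithm such as Tarjan's. We then topologically sort the condensation $G/{\sim}$ to obtain a generalized decomposition $(I_1,\ldots,I_k)$ in which each $|I_i|\leq c$ for a constant $c$. By Theorem \ref{thm:condensation}, the attractors are the products $a_1\times\cdots\times a_k$ obtained by the dependent-sum recursion. We represent them factorized, as a rooted tree (or DAG) whose depth-$i$ nodes are labelled by attractors $a_i\subseteq\B^{I_i}$, together with the explicit sets $a_i$. Each $a_i$ has size at most $2^c$, a constant.

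The key step is to compute, for a given prefix $(a_1,\ldots,a_{i-1})$, the network $N_i^{a_1,\ldots,a_{i-1}}$ and its attractors in polynomial time. By Corollary \ref{prop:gendec}, $\ASTG(N_i^{a_1,\ldots,a_{i-1}})$ is the edge union over $x\in a_1\times\cdots\times a_{i-1}$ of the graphs $\ASTG$ of $f_v^{x}$ for $v\in I_i$. This product is exponentially large, which is the main obstacle. We avoid it using sparsity. Each $f_v$ with $v\in I_i$ depends on at most $d$ inputs, so its external inputs lie in a set $J_i=\bigcup_{v\in I_i}G^-(v)\setminus I_i$ of size at most $cd$. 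The edge union therefore depends only on the projection of $a_1\times\cdots\times a_{i-1}$ onto $J_i$. Since that set is a product, its projection is the product of the projections of the $a_j$ onto $J_i\cap I_j$. It has at most $2^{cd}$ elements and is computable in polynomial time. For each such projected control $x$ and each $y\in\B^{I_i}$, we evaluate $f_v(x,y)$, whose truth table has constant size $2^d$. This builds $\ASTG(N_i^{a_1,\ldots,a_{i-1}})$ on $2^c$ vertices, and its terminal strongly connected components are found in constant time.

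It remains to bound the size of the representation. The tree of all prefixes can be exponentially large, since each level may branch, and this is the delicate point. We handle it by memoization. The computation at level $i$ depends only on the tuple of projections of the $a_j$ onto $J_i\cap I_j$, $j<i$. That tuple ranges over a set of size bounded by a constant, namely at most $2^{2^{cd}}$ possible subsets of $\B^{J_i}$. For each level $i$ and each possible projected context, we store the list of attractors of $N_i$, giving a table of polynomial size $O(k)$ times a constant.

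The factorized representation is then a layered DAG. Level $i$ has nodes indexed by pairs (relevant context, attractor $a_i$), and edges record which contexts at later levels are compatible. The tricky part is that the context needed at level $i$ mixes coordinates from several earlier levels. A path in the DAG must therefore carry enough information, and the number of such states could grow. If this becomes problematic, I would settle for the weaker but honest reading of the statement. Given the choices $a_1,\ldots,a_{i-1}$, the set $\attractors(N_i^{a_1,\ldots,a_{i-1}})$ is computable in polynomial time. Hence the dependent-sum representation of Theorem \ref{thm:condensation} is given by polynomial-time computable fibres, and every attractor, as well as enumeration with polynomial delay, is obtained level by level. The total cost is $O(n)$ levels times constant work per level.
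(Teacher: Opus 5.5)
Your argument is correct for the reading you settle on, and its skeleton is the paper's: Tarjan's algorithm, then Theorem~\ref{thm:condensation} level by level, with bounded modules and bounded in-degree keeping each level cheap. The difference is in how a level is computed.

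The paper never forms the large product $a_1\times\cdots\times a_{i-1}$. It folds in one attractor at a time, maintaining a global controlled network $R_{i+1}=R_i(I_i,a_i)$ on all remaining vertices, and it obtains $O(n^3)$ from $k$ repetitions of that step. You instead go directly to $N_i^{a_1,\ldots,a_{i-1}}$ via Corollary~\ref{prop:gendec}, and you avoid the product by projecting onto the bounded set $J_i$. This makes each level constant-time. It also lets your memoization table serve as one linear-size object from which every fibre is read off, with no recomputation per prefix.

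You also make explicit something the paper's count passes over. The $k$ repetitions follow a single branch of the dependent sum, and its tree of prefixes can be exponential.

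Your fallback is not merely cautious: the layered-DAG reading fails in general unless $\mathrm{P}=\mathrm{NP}$. Let $m$ vertices with positive self-loops (or positive $2$-cycles) feed, through gates of fan-in at most $3$, the evaluation of a 3-CNF formula $\phi$ at a vertex $t$. This network is sparse with modules of size at most $2$. Its attractors are exactly the $2^m$ fixed points extending the assignments. A polynomial-time computable layered DAG whose source-to-sink paths are these attractors would decide by reachability whether some attractor has $x_t=1$, that is, whether $\phi$ is satisfiable.

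So what your proof establishes is a dependent sum whose fibres are non-empty (every finite non-empty digraph has a terminal component) and polynomial-time computable. This yields each attractor in linear time and enumeration with polynomial delay, and it is also the most the paper's argument supports.
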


\begin{proof}
Let $N=(G,f)$ be an $n$-dimensional Boolean network, and $(I_1,\ldots,I_k)$ be a generalized decomposition of $N$, where the $I_i$ are vertex sets of strongly connected components of $G$. This can be computed in quadratic time (in $n$) using Tarjan’s algorithm. Then we compute:
\begin{itemize}
\item $N[I_1]$ in quadratic time;
\item $\attractors(N[I_1])$ in constant time, as the strong module $N[I_1]$ is assumed to have dimension bounded by some constant $c$;
\item for each of the at most $2^c$ attractors of $N[I_1]$, say $a_1\in\attractors(N[I_1])$, the network $N(I_1,a_1)$: for this, with compute at most $2^c$ networks $N(I_1,x)$ for $x\in a_1$ and take edge union; for each $x$, computing $N(I_1,x)$ takes quadratic time for the interaction graph and constant time for the Boolean functions in the sparse Boolean network $N$; summing up, it takes linear time.
\end{itemize}
Such a sequence has to be repeated $k\leq n$ times by Theorem \ref{thm:condensation}, whence a total time in $\mathcal{O}(n^3)$.
\end{proof}

\subsection{Polynomial size circuits and nested canalization}
\label{sec:canal}

Instead of considering sparse Boolean networks, one may require that the Boolean functions $f_v:\B^{G^-(v)}\to\B$, $v\in V(G)$, defining the network $N=(G,f)$ be representable by small circuits. For instance by polynomial circuits, \ie Boolean circuits of size bounded by $P(|G^-(v)|)$ for some fixed polynomial $P$ (independent of $n$). Call these networks Boolean networks \emph{with polynomial size circuits}.

There is an interesting class of Boolean networks with polynomial size circuits, namely \emph{nested canalizing} Boolean networks: those networks $N=(G,f)$ such that for each $v\in V(G)$, the function $f_v:\B^{G^-(v)}\to\B$ is nested canalizing. Recall that a Boolean function $f:\B^X\to\B$, with $|X|=d$, is said to be \emph{nested canalizing} if there exists a bijection $\sigma:\{1,\dots,d\}\to X$, along with canalizing input values $a_1, \dots, a_d \in \B$, canalized output values $b_1, \dots, b_d \in \B$, and a final output $b_{d+1}\in\B$, such that
$$
f(x) =
\begin{cases}
b_1 & \text{if } x_{\sigma(1)} = a_1 \\
b_2 & \text{if } x_{\sigma(1)} \ne a_1 \text{ and } x_{\sigma(2)} = a_2 \\
\;\vdots \\
b_d & \text{if } x_{\sigma(1)} \ne a_1, \dots, x_{\sigma(d-1)} \ne a_{d-1} \text{ and } x_{\sigma(d)} = a_d \\
b_{d+1} & \text{if } x_{\sigma(1)} \ne a_1, \dots, x_{\sigma(d)} \ne a_d.
\end{cases}
$$
Indeed, such a representation of $f_v$ can be viewed as a linear size Boolean circuit.

With the small circuits condition, computing the Boolean functions defining $N(I_1,a_1)$ in the proof of Theorem \ref{th:complexity} takes polynomial time too.

\begin{thm}
\label{th:complexityNC}
Boolean networks with polynomial size circuits and bounded strong modules (in particular nested canalizing Boolean networks with bounded strong modules) have (factorized representations of) attractors computable in polynomial time in the network's dimension.
\end{thm}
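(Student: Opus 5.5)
The plan is to rerun the proof of Theorem \ref{th:complexity} step by step. The only change is that every step which used sparsity, namely building the restricted Boolean functions $f^x_v$ in constant time, now uses the circuit representation. First, compute the strong modules $I_1,\ldots,I_k$ of $G$ with Tarjan's algorithm and order them linearly, which gives a generalized decomposition. This takes time polynomial in $n$ regardless of sparsity. By Theorem \ref{thm:condensation}, it then suffices to compute, layer by layer, the sets $\attractors(N_i^{a_1,\ldots,a_{i-1}})$ for all admissible tuples. We keep a factorized representation: a tree whose nodes at depth $i$ are the attractors $a_i$, each stored as a subset of $\B^{I_i}$ with $|I_i|\leq c$.

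The key point is how to represent the controlled networks. I would not materialize the truth tables of $f^x_v$ for $v$ outside $I_1\cup\cdots\cup I_{i-1}$, since these can have exponential size in $|G^-(v)|$. Instead, $f^x_v$ is represented by the circuit of $f_v$ with the inputs in $I_1\cup\cdots\cup I_{i-1}$ hardwired to the constants given by $x$. Proposition \ref{prop:ass} and Corollary \ref{prop:gendec} give $(f^{x_1}_v)^{x_2}=f^{(x_1,x_2)}_v$, so hardwiring can be done incrementally and the circuit size never exceeds $P(|G^-(v)|)\leq P(n)$. For a set $a$ of controlling states, $N(\cdot,a)$ is represented implicitly as the family of hardwired circuits, one for each $x$ in $a$. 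By the edge-union formula in the proof of Proposition \ref{prop:ass}, the transition $y\to y^v$ exists iff some member of the family flips $v$.

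The main obstacle is controlling how the number of controlling states $x$ grows. A naive product $a_1\times\cdots\times a_{i-1}$ can have exponential size. Since $|I_j|\leq c$, each $a_j$ has at most $2^c$ elements, but the product has up to $2^{c(i-1)}$. To avoid this, evaluate $f_v$ for $v\in I_i$ only on its actual inputs. Computing $\ASTG(N_i^{a_1,\ldots,a_{i-1}})$ requires, for each $y\in\B^{I_i}$ (at most $2^c$ of them) and each $v\in I_i$, deciding whether some $x\in a_1\times\cdots\times a_{i-1}$ gives $f_v(x,y)\neq y_v$. The relevant coordinates of $x$ are those in $G^-(v)$, and these may range over many modules.

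I would first try to exploit that $a_1\times\cdots\times a_{i-1}$ is a product of sets of constant size, and resolve the existential quantifier by self-reduction on the circuit module by module. This is not obviously polynomial for general circuits, because it is a satisfiability-type question. For nested canalizing functions it is easy: process the variables in canalizing order, with each coordinate ranging over the projection of its module's attractor. Failing a general argument, I would follow the paper's apparent intent and bound only the dependence actually needed along a single branch of the dependent sum, so that each $N(I_1,a_1)\cdots$ is computed by circuit hardwiring in polynomial time per branch. This yields a polynomial bound per step and $k\leq n$ repetitions, as in Theorem \ref{th:complexity}.
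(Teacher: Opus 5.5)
Your plan is the paper's plan. Its only justification of Theorem~\ref{th:complexityNC} is the sentence that, with small circuits, the functions of $N(I_1,a_1)$ are computable in polynomial time, after which the proof of Theorem~\ref{th:complexity} is rerun. You correctly see what this hides, but your fallback for general circuits is a genuine gap.

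The product $a_1\times\cdots\times a_{i-1}$ is already the control set along a \emph{single} branch of the dependent sum, so restricting to one branch changes nothing. Hardwiring handles one controlling state. The edge union defining $N(I_j,a_j)$, however, turns a circuit $C$ for $f_v$ into $y_v\oplus\bigvee_{x\in a_j}(C(x,y)\oplus y_v)$. This multiplies its size by up to $2^c$ at each layer, so the fixed polynomial $P$ is not preserved by the recursion.

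No other argument can close the gap unless $\mathrm{P}=\mathrm{NP}$. Take strong modules $\{u_1\},\ldots,\{u_m\},\{v\}$ with $f_{u_j}=\neg x_{u_j}$ and $f_v=\phi(x_{u_1},\ldots,x_{u_m})$ for a 3-CNF $\phi$. This $f_v$ is a circuit of size $O(m^3)$ with $m=|G^-(v)|$. Each $u_j$ has the unique attractor $\B$. By Theorem~\ref{thm:condensation}, the unique attractor of $N$ is $\B^m\times a$, where $1\in a$ iff $\phi$ is satisfiable. So computing the attractor, even in factorized form and with the $f_v$ given as circuits, decides 3-SAT. The general-circuit half of the statement therefore cannot hold unless $\mathrm{P}=\mathrm{NP}$, and the paper's sentence overlooks exactly the blow-up you found.

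What survives is your canalizing argument, which is the right substitute for sparsity, with one correction. Inputs of $f_v$ lying in the same module $I_j$ are correlated, so you must project $a_j$ jointly onto $I_j\cap G^-(v)$, not coordinatewise. For example, $f_p=f_q=\neg x_p\wedge\neg x_q$ gives the attractor $\{00,01,10\}$ on the module $\{p,q\}$. On it the nested canalizing $p\wedge q$ is never true, yet the coordinatewise projections allow $11$.

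With this fix, $f_v$ takes a value $\neq y_v$ on $a_1\times\cdots\times a_{i-1}$ iff some level of its nested canalizing representation has output different from $y_v$ and a satisfiable defining conjunction of literals. A conjunction of literals is satisfiable on a product iff each factor $a_j$ (at most $2^c$ points) contains a point satisfying the literals on $I_j$. This test is polynomial. It also shows that the hypothesis doing the work is tractability of satisfiability over products of small sets, not circuit size.

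Finally, ``polynomial per step, $k\leq n$ steps'' ignores branching. Take $n$ independent switches $f_u=x_u$, which are nested canalizing with modules of size $1$: they have $2^n$ attractors. Your tree then has $2^n$ leaves, and you only get time polynomial per attractor. This blind spot is already present in the paper's proof of Theorem~\ref{th:complexity}.
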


%

\section{Example of ERBB receptor-regulated G1/S transition}
\label{sec:bioex}

We consider the Boolean model of ERBB-driven signaling controlling the G1/S cell-cycle transition, as presented in \cite{SFLKBMMSCTPWBA09}, which is used to study and predict mechanisms of trastuzumab resistance. It models how signals from the ERBB receptor family regulate the transition from G1 phase to S phase of the cell cycle in cancer cells.
It involves $20$ variables: EGF, ERBB1, ERBB2, ERBB3, ERBB12, ERBB13, ERBB23, IGF1R, ER$\alpha$, cMYC, AKT1, MEK1, CDK2, CDK4, CDK6, CyclinD1, CyclinE1, p21, p27, pRB. EGF is an input of the network, and the regulatory functions of the other $19$ variables are recalled in Appendix \ref{app:rules}.

The interaction graph is represented in Figure 3 of \cite{SFLKBMMSCTPWBA09}. It has $12$ trivial strong modules (with a single vertex), and $2$ strong modules with $4$ vertices. The condensation is shown in Figure \ref{fig:condensation}, where $0$ represents the input EGF and the other genes are represented by a number from $1$ to $19$, as in Appendix \ref{app:rules}. The $2$ strong modules with $4$ vertices are represented by
\begin{align*}
A & = \{7,8,10,11\}=\{\text{IGF1R},\text{ER$\alpha$},\text{AKT1},\text{MEK1}\} \\
B & = \{12,13,17,18\}=\{\text{CDK2},\text{CDK4},\text{p21},\text{p27}\}.
\end{align*}
The small strong modules enable to compute all asynchronous attractors. Propagating attractors top-down in the consensation, we find that if the input EGF is $0$ (resp. $1$), the only attractor for the top $7$ variables fixes them to $0$ (resp. $1$). Then the controlled subnetwork $N_A^0$ (resp. $N_A^1$) on $A$ simplifies and is represented in Figure \ref{fig:condensation}. Propagating further down to the subnetworks $N_B^0$ and $N_B^1$ on $B$, we find $3$ global attractors, which turn out to be fixed points: with the $01$-notation in the ordering of genes defined above, they are $00000000000000000000$ (all genes inactive), $00000001111111111001$ (EGF is inactive but strong module $A$ stabilizes in an active state, which propagates down to pRB), and $11111111111111111001$ (EGF is active and forces pRB).


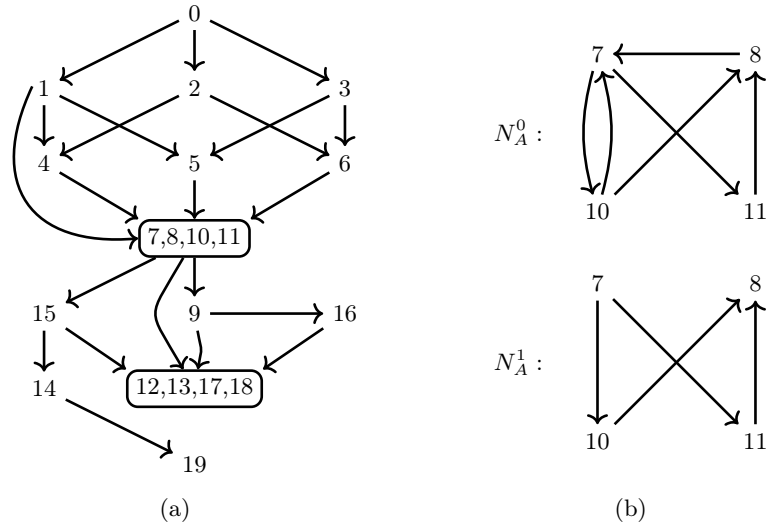
\begin{figure}[t]
\centering
\quad\quad
\begin{subfigure}{0.45\textwidth}
\centering
\begin{tikzpicture}[scale=1,line width=1pt]
\node (0) at (0,7) {0};
\node (1) at (-2,6) {1};
\node (2) at (0,6) {2};
\node (3) at (2,6) {3};
\node (4) at (-2,5) {4};
\node (5) at (0,5) {5};
\node (6) at (2,5) {6};
\node[draw, rounded corners] (A) at (0,4) {7,8,10,11};
\node (15) at (-2,3) {15};
\node (9) at (0,3) {9};
\node (16) at (2,3) {16};
\node[draw, rounded corners] (B) at (0,2) {12,13,17,18};
\node (14) at (-2,2) {14};
\node (19) at (0,1) {19};
\draw[->] (0) -- (1);
\draw[->] (0) -- (2);
\draw[->] (0) -- (3);
\draw[->] (1) -- (4);
\draw[->] (1) -- (5);
\draw[->] (2) -- (4);
\draw[->] (2) -- (6);
\draw[->] (3) -- (5);
\draw[->] (3) -- (6);
\draw[->] ([xshift=0.5mm,yshift=2.5mm]1.south west) to[bend right=60, looseness=1.5] (A.west);
\draw[->] (4) -- (A.north west);
\draw[->] (5) -- (A);
\draw[->] (6) -- (A.north east);
\draw[->] (A) -- (15);
\draw[->] (A) -- (9);
\draw[->] (A) to[bend right=30, looseness=1.7] (B);
\draw[->] (9) -- (16);
\draw[->] (9) to[bend left=10, looseness=1.7] (B);
\draw[->] (15) -- (B.north west);
\draw[->] (16) -- (B.north east);
\draw[->] (15) -- (14);
\draw[->] (14) -- (19);
\end{tikzpicture}
\caption{}
\label{fig:a}
\end{subfigure}
\hfill
\begin{subfigure}{0.45\textwidth}
\centering
\begin{tikzpicture}[scale=0.7,line width=1pt]
\node at (-1.5,1.5) {$N_A^0:$};
\node (10) at (0,0) {10};
\node (11) at (3,0) {11};
\node (8) at (3,3) {8};
\node (7) at (0,3) {7};
\draw[->] (8) -- (7);
\draw[->] (11) -- (8);
\draw[->] (10) -- (8);
\draw[->] (7) -- (11);
\draw[->] (7) to[bend right=15] (10);
\draw[->] (10) to[bend right=15] (7);
\end{tikzpicture}
\\[5mm]
\begin{tikzpicture}[scale=0.7,line width=1pt]
\node at (-1.5,1.5) {$N_A^1:$};
\node (10) at (0,0) {10};
\node (11) at (3,0) {11};
\node (8) at (3,3) {8};
\node (7) at (0,3) {7};
\draw[->] (11) -- (8);
\draw[->] (10) -- (8);
\draw[->] (7) -- (11);
\draw[->] (7) -- (10);
\end{tikzpicture}
\\[3mm]
\caption{}
\label{fig:b}
\end{subfigure}
\caption{Condensation and subnetworks $N_A^0,N_A^1$.}
\label{fig:condensation}
\end{figure}

\bibliographystyle{plain}


\newpage

\appendix
\section{Regulatory functions of the G1/S transition model}
\label{app:rules}

We recall here the regulatory functions of the G1/S transition model presented in \cite{SFLKBMMSCTPWBA09}, together with variable numbering. For each target variable, the input variables are represented by their number.

\begin{center}
\setlength{\tabcolsep}{2pt}
\begin{tabular}{rl@{\hspace{10pt}}l@{\hspace{20pt}}rl@{\hspace{10pt}}l}
\multicolumn{2}{c}{target} & Boolean function & \multicolumn{2}{c}{target} & Boolean function
\\
\hline
1 = &
ERBB1 & 0
&
2 = &
ERBB2 & 0
\\
3 = &
ERBB3 & 0
&
4 = &
ERBB12 & 1 \textwedge 2
\\
5 = &
ERBB13 & 1 \textwedge 3
&
6 = &
ERBB23 & 2 \textwedge 3
\\
7 = &
IGF1R & (8 \textvee 10) \textwedge \textneg 6
&
8 = &
ER$\alpha$ & 10 \textvee 11
\\
9 = &
cMYC & 8 \textvee 10 \textvee 11
&
10 = &
AKT1 & 1 \textvee 4 \textvee 5 \textvee 6 \textvee 7
\\
11 = &
MEK1 & 1 \textvee 4 \textvee 5 \textvee 6 \textvee 7
&
12 = &
CDK2 & 16 \textwedge \textneg 17 \textwedge \textneg 18
\\
13 = &
CDK4 & 15 \textwedge \textneg 17 \textwedge \textneg 18
&
14 = &
CDK6 & 15
\\
15 = &
CyclinD1 & 8 \textwedge 9 \textwedge (10 \textvee 11)
&
16 = &
CyclinE1 & 9
\\
17 = &
p21 & \multicolumn{4}{@{\hspace{0pt}}l}{%
8 \textwedge \textneg 10 \textwedge \textneg 9 \textwedge \textneg 13
}
\\
18 = &
p27 & \multicolumn{4}{@{\hspace{0pt}}l}{%
8 \textwedge \textneg 10 \textwedge \textneg 9 \textwedge \textneg 13 \textwedge \textneg 12
}
\\
19 = &
pRB & \multicolumn{4}{@{\hspace{0pt}}l}{%
13 \textwedge 14
}
\\[1mm]
\end{tabular}
\end{center}
\end{document}